\theoremstyle{plain}
\newtheorem{theorem}{Theorem}[section]
\newtheorem{lemma}[theorem]{Lemma}
\newtheorem{corollary}[theorem]{Corollary}
\theoremstyle{definition}
\newtheorem{notation}[theorem]{Notation}
\theoremstyle{remark}
\newtheorem{remark}[theorem]{Remark}
\newcommand{\Iso}{\operatorname{Iso}}
\newcommand{\supp}{\operatorname{supp}}
\newcommand{\id}{\operatorname{id}}
\newcommand{\Aa}{\mathcal{A}}
\newcommand{\Bb}{\mathcal{B}}
\newcommand{\Ii}{\mathcal{I}}
\newcommand{\Iicl}{\overline{\mathcal{I}}}
\numberwithin{equation}{section}
\author[T.M.~Carlsen]{Toke Meier Carlsen}
\address[T.M.~Carlsen]{K\o{}ge, Denmark}
\email{toke.carlsen@gmail.com}
\author{Anna Duwenig}
\address{Department of Mathematics, KU Leuven, Leuven, Belgium}
\email{anna.duwenig@kuleuven.be}
\author[E. Ruiz]{Efren Ruiz}
\address[E. Ruiz]{Department of Mathematics\\University of Hawaii,
Hilo\\200 W. Kawili St.\\
Hilo, Hawaii\\
96720-4091 USA}
\email{ruize@hawaii.edu}
\author[A. Sims]{Aidan Sims}
\address[A. Sims]{School of Mathematics and Applied Statistics\\
University of Wollongong\\
NSW 2522\\
Australia}
\email{asims@uow.edu.au}
\keywords{Groupoid; $C^*$-algebra; Cartan subalgebra}
\subjclass{46L05 (primary)}
\thanks{Anna Duwenig was supported by an FWO Senior Postdoctoral Fellowship (1206124N). This research was supported by Australian Research Council grant DP220101631.}
\title[Norm upper-semicontinuity]{Norm upper-semicontinuity of functions supported on open abelian isotropy in \'etale groupoids
(a corrigendum to \emph{Reconstruction of groupoids and C*-rigidity of dynamical systems}, Adv. Math 390 (2021), 107923)}
\newcommand{\W}{Z} 
\begin{document}

\begin{abstract}
We consider \'etale Hausdorff groupoids in which the interior of the isotropy is abelian. We prove that the norms of the images under regular representations, of elements of the reduced groupoid $C^*$-algebra whose supports are contained in the interior of the isotropy vary upper semicontinuously. This corrects an error in \cite{CRST}.
\end{abstract}

\maketitle

\section*{Introduction}
The purpose of this note is to correct an error in the proof of \cite[Lemma~5.2]{CRST}.

\subsection*{The error}
The error occurs in the final paragraph of the proof of \cite[Lemma~5.2]{CRST}. The fourth line in
that paragraph deals with an element $a$ of the $C^*$-subalgebra
\begin{equation}\label{eq:def A}
A := \{a \in C^*_r(G) : f_a|_{G \setminus \Ii} = 0\} \subseteq C^*_r(G),
\end{equation}
and asserts that
\begin{equation}\label{eq:just plain wrong}
\text{``since $f_a|_{\Iso(G)^\circ_u} = 0$, we have $\|\pi_u(a)\|= 0$.''}
\end{equation}
However, $\pi_u$ has domain $C^*(\Iso(G)^\circ)$, not $A$.
Indeed, \cite[Lemma~5.2]{CRST} is later applied to prove \cite[Corollary~5.3]{CRST}, which
asserts that $A = C^*(\Iso(G)^\circ)$. So the application of $\pi_u$ to an element of $A$
is circular reasoning.

\subsection*{Consequences of the error}
The error invalidates the proof of
\cite[Corollary~5.3]{CRST}, which underpins the main results of \cite{CRST}: it
is used in the paragraph immediately subsequent to \cite[Proposition~6.5]{CRST} to identify
$\big(C_0(G^{(0)})'_{C^*_{c^{-1}(\id_\Gamma)}}\big)_u$ with
$\big(C^*_r(\Iso(c^{-1}(\id_\Gamma))^\circ)\big)_u$. This in turn is used in
the proof of \cite[Proposition~6.5]{CRST} to see that the group
\[
\Iso(c^{-1}(\id_\Gamma))^\circ_u
    \cong \mathcal{U}\big(\big(C^*_r(\Iso(c^{-1}(\id_\Gamma))^\circ)\big)_u\big)
    / \mathcal{U}_0\big(\big(C^*_r(\Iso(c^{-1}(\id_\Gamma))^\circ)\big)_u\big),
\]
can be recovered from $\big(C_0(G^{(0)})'_{C^*_{c^{-1}(\id_\Gamma)}}\big)_u$,
and hence from the triple $(C^*_r(G), C_0(G^{(0)}), \delta_c)$.

\subsection*{Related results}
If $G$ is amenable, then \cite[Theorem~4.2]{BEFPR} implies
\cite[Corollary~5.3]{CRST}. If $G$ contains
a clopen amenable subgroupoid that
contains $\overline{\Iso(G)^\circ}$, we can use the argument of
\cite[Theorem~4.2]{BEFPR} in the same way. If $\Iso(G)^\circ$ is closed, then
the argument of \cite[Proposition~3.12]{DWZ} shows that $A =
C^*(\Iso(G)^\circ)$. However, imposing any of these hypotheses would
substantially weaken the main results of \cite{CRST}.

\subsection*{The correction}
The first three paragraphs of the proof of \cite[Lemma~5.2]{CRST} are correct. In the following paragraph,
the proof that $J_u \subseteq \{a \in A : f_a|_{\operatorname{Iso}(G)^\circ_u}\}$ is correct. The
following four sentences ``For the reverse containment...proves the claim" are invalid as they depend upon
the erroneous assertion~\eqref{eq:just plain wrong} mentioned above. The remainder of the proof is correct
\emph{provided} that it is indeed true that $J_u \supseteq \{a \in A : f_a|_{\operatorname{Iso}(G)^\circ_u}\}$.
Hence Theorem~\ref{thm:phew!} below corrects the proof of \cite[Lemma~5.2]{CRST}, and then \cite[Corollary~5.3]{CRST}
and the subsequent results that depend upon it follow as argued there.

\section{The correction for the proof of \cite[Lemma~5.2]{CRST}}

\begin{notation}
Throughout this note, $G$ is an \'etale groupoid.
For $x\in G^{(0)}$, we write $Gx:=s^{-1}(x)$ and $xG:=r^{-1}(x)$ (note that this differs from
Renault's notation $G_x = s^{-1}(x)$ and $G^x = r^{-1}(x)$),
and we denote by $\Iso(G)$ the isotropy subgroupoid of $G$; that is, $\Iso(G):=
\bigcup_{x \in G^{(0)}} (xG \cap Gx)$. We will assume throughout that the interior
$\Iso(G)^\circ$ of the isotropy, is abelian. We make frequent use
of the set $\Iso(G)^\circ$, its closure $\overline{\Iso(G)^\circ}$, and their
fibres $\Iso(G)^\circ \cap Gx$ and $\overline{\Iso(G)^\circ} \cap Gx$.
Because of the utter failure of commutativity amongst the operations of
adding an overline, a superscripted $\circ$ or a subscripted $x$ to the
expression $\Iso(G)$, we use the following non-standard notation:
\begin{align*}
\Ii &:= \Iso(G)^\circ, & \Iicl &:= \overline{\Iso(G)^\circ},\\
\Ii x &:= \Iso(G)^\circ \cap Gx,\text{ and} & \Iicl x &:= \overline{\Iso(G)^\circ} \cap Gx.
\end{align*}
\end{notation}

We recall that $j \colon C^*_r(G) \to C_0(G)$ is Renault's map extending the
identity map on $C_c(G)$ \cite[Proposition~II.4.2]{Renault} (in \cite{CRST}
this map is denoted $a \mapsto f_a$); if $\lambda_x$ denotes the regular
representation of $C^*_r(G)$ on $\ell^2(Gx)$ for $x \in G^{(0)}$, then $j$ is
defined by $j(a)(\gamma) = \big(\lambda_{s(\gamma)}(a) e_{s(\gamma)} \mid
e_\gamma)$, and satisfies $(\lambda_x(a) e_\gamma \mid e_\delta) =
j(a)(\gamma\delta^{-1})$ for all $x \in G^{(0)}$ and $\gamma,\delta \in Gx$.

For $g \in C_0(G)$, we write $\supp^\circ(g) = \{\gamma \in G : g(\gamma) \not=
0\}$ for the open support of $g$ and $\supp(g) = \overline{\supp^\circ(g)}$ for
its support in the usual sense.

\begin{remark}\label{rmk:products}
We frequently write expressions of the form $\prod_{i \in S} \beta_i$ where
the $\beta_i$ are elements of $G$ and $S$ is an un-ordered indexing set. We
do this only when the $\beta_i$ are pairwise commuting isotropy elements, so
that the product does not depend on the order in which $S$ is listed.
Similarly, we frequently write $\prod_{i \in S} (C_i \cap \Ii)$, where the
$C_i$ are open bisections in $G$, exploiting that $\Ii$ is abelian so that
the product is well defined. If the terms in a product do not necessarily
commute, we write it as $\prod^N_{i=1} F_i$, which always means $F_1 F_2
\cdots F_N$.
\end{remark}

We take the convention that if $\alpha \in G$, then $\alpha^0 =
s(\alpha)$ and if $C \subseteq G$ is an open bisection, then $C^0 = s(C)$.

As detailed above, to correct the proof of \cite[Lemma~5.2]{CRST}, it suffices to establish the
following assertion.

\begin{theorem}\label{thm:phew!}
Let $A = \{a \in C^*_r(G) : j(a)|_{G \setminus \Ii} = 0\} \subseteq C^*_r(G)$. Then for $x \in G^{(0)}$,
\[
\{a \in A : j(a)|_{\Ii x} = 0\} \subseteq J_x := \overline{\{ad : a \in A, d \in
C_0(G^{(0)})\text{ and }d(x) = 0\}}.
\]
\end{theorem}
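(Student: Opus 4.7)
The plan is to introduce, for each $a\in A$ and $z\in G^{(0)}$, the fibrewise norm
\[
\phi_a(z) := \|j(a)|_{\Ii z}\|_{C^*(\Ii z)},
\]
noting that $\Ii z$ is a discrete abelian group so $C^*(\Ii z)=C^*_r(\Ii z)$, and to reduce the theorem to upper semicontinuity of $z\mapsto\phi_a(z)$. The main computational tool is the identity
\[
\|a(1-d)\|_{C^*_r(G)}=\sup_{z\in G^{(0)}}|1-d(z)|\,\phi_a(z) \qquad (a\in A,\ d\in C_0(G^{(0)})),
\]
which I would prove as follows. From the formula for Renault's map one has $j(ad)(\gamma)=j(a)(\gamma)d(s(\gamma))$, so $a(1-d)\in A$ and $\phi_{a(1-d)}(w)=|1-d(w)|\,\phi_a(w)$. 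Because $j(a)$ is supported on isotropy, the operator $\lambda_z(a)$ preserves each subspace $\ell^2(Gz\cap r^{-1}(w))$ of $\ell^2(Gz)$, and on each such subspace it decomposes as a direct sum of copies of the regular representation of $C^*(\Ii w)$ evaluated at $j(a)|_{\Ii w}$; hence $\|\lambda_z(a)\|=\sup_{w\in r(Gz)}\phi_a(w)$. Applying this with $a(1-d)$ in place of $a$ and taking the sup over $z$ yields the identity.

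Granting upper semicontinuity of $\phi_a$, I would finish as follows. Given $a\in A$ with $\phi_a(x)=0$ and $\varepsilon>0$, pick an approximate identity $\{e_\lambda\}\subseteq C_c(G^{(0)})$ of $C_0(G^{(0)})$: since $ae_\lambda\to a$ in $C^*_r(G)$ and $J_x$ is closed, it suffices to show each $ae_\lambda$ lies in $J_x$, and for these $\phi_{ae_\lambda}=e_\lambda\,\phi_a$ has compact support inside $\supp(e_\lambda)$ while still vanishing at $x$. So we may assume $\phi_a$ has compact support $K\subseteq G^{(0)}$. Upper semicontinuity at $x$ gives an open $U\ni x$ with $\phi_a<\varepsilon$ on $U$, and Urysohn then produces $d\in C_c(G^{(0)})\subseteq C_0(G^{(0)})$ with $0\le d\le 1$, $d=0$ on a neighborhood of $x$ inside $U$, and $d=1$ on the compact set $K\setminus U$. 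The key identity bounds $\|a(1-d)\|\le\varepsilon$ (on $U$ both factors are controlled; on $K\setminus U$ the factor $1-d$ vanishes; outside $K$ the factor $\phi_a$ vanishes), while $ad\in J_x$ by construction, so $a=ad+a(1-d)$ gives $\operatorname{dist}(a,J_x)\le\varepsilon$.

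The main obstacle, and the substantive content of the paper, is establishing upper semicontinuity of $\phi_a$ for general $a\in A$. One cannot simply invoke density of $C_c(\Ii)$ in $A$, because the equality $A=C^*_r(\Ii)$ is precisely \cite[Corollary~5.3]{CRST}, which is downstream of the theorem under consideration. My tentative strategy is to handle $a\in C_c(\Ii)$ first: writing $a=\sum_i a_i$ via a partition of unity subordinate to open bisections $B_i\subseteq\Ii$, each $\phi_{a_i}$ is continuous on $G^{(0)}$, and the abelianness of $\Ii z$ should let one control the cross-terms using characters of $\Ii z$ that vary coherently with $z$ near a fixed base point. The harder step is promoting this to arbitrary $a\in A$ using only the defining condition $j(a)|_{G\setminus\Ii}=0$; a plausible route would combine the key identity with a uniform-in-$z$ approximation of $\lambda_z(a)$ by operators associated to compactly supported sections of $\Ii$.
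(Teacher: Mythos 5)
Your reduction of the theorem to a fibrewise upper-semicontinuity statement is sound and closely parallels the paper's own endgame: your $\phi_a(z)$ is $\|\Phi_z(a)\|$ for the compression $\Phi_z(a)=P_{\Ii z}\lambda_z(a)P_{\Ii z}$, your identity $\|b\|=\sup_z\phi_b(z)$ for $b\in A$ is Equation~\eqref{eq:oplus Phi isometric} (your orbit decomposition of $\ell^2(Gz)$ into free $\Ii w$-orbits is a correct and somewhat more explicit proof of it), and your Urysohn cut-off producing $d$ with $\|a(1-d)\|\le\varepsilon$ and $ad\in J_x$ is exactly how the paper concludes. But the proposal then stops at the point where the actual content begins: the upper semicontinuity of the fibrewise norm is not proved, only described as a ``tentative strategy'' and a ``plausible route.'' That statement is precisely Lemma~\ref{lem:sheesh}, whose proof occupies Lemmas \ref{lem:subgroup}--\ref{lem:editing g} and is where essentially all of the work in this note lives. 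So there is a genuine gap, and it is the central one.

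Two of your suggested routes for closing it would not work as stated. First, approximating $\lambda_z(a)$ ``uniformly in $z$ by operators associated to compactly supported sections of $\Ii$'' is, in substance, the assertion that $A=\overline{C_c(\Ii)}=C^*_r(\Ii)$ --- i.e.\ \cite[Corollary~5.3]{CRST}, the downstream statement this theorem exists to repair; that is the circularity you correctly flagged and then walked back into. The paper's escape is to approximate $a$ by $g\in C_c(G)$ with \emph{unrestricted} support (always possible in the $C^*_r(G)$-norm) and to prove the semicontinuity estimate for arbitrary $g\in C_c(G)$, comparing $\|\Phi_y(g)\|$ not with $\|\Phi_x(g)\|$ but with $\|Q\lambda_x(g)Q\|$ where $Q$ compresses to $\ell^2(\Iicl x)$, the fibre of the \emph{closure} of $\Ii$. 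The closure is forced: elements of $\Ii y$ accumulate at points of $\Iicl x\setminus\Ii x$ as $y\to x$, so $y\mapsto\|\Phi_y(g)\|$ need not be dominated near $x$ by anything computed from $\Ii x$ alone; your $\phi_a$ suffices for the particular $a$ at hand only because $j(a)$ vanishes on all of $Gx$ (being zero on $\Ii x$ and on $G\setminus\Ii\supseteq\Iicl x\setminus\Ii x$), which is exactly how the paper gets $Q\lambda_x(a)Q=0$. Second, even for $g\in C_c(\Ii)$ the phrase ``characters of $\Ii z$ that vary coherently with $z$'' conceals the real difficulty: the finitely many elements $\beta_{i,y}\in C_i\cap\Ii y$ satisfy group relations (torsion orders, products of generators landing back in another bisection $C_j$) that appear and disappear as $y$ varies, and to compare norms you need, for each $y$ near $x$, an actual homomorphism $q_y\colon K_y\to\Ii y$ out of a subgroup $K_y\le\Iicl x$ sending $\gamma_i$ into $C_i\cap\Ii y$, injective on $\supp(g)$ --- equivalently, a coherent pullback of characters. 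Constructing these (via the fundamental theorem of finitely generated abelian groups and a careful shrinking of bisection neighbourhoods so that, e.g., $V_k^{O_k}=s(V_k)$) is Lemma~\ref{lem:subgroup} and Corollary~\ref{cor:all the subgroups}, and none of it is present in the proposal.
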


For the following lemma observe that since $r, s \colon G \to G^{(0)}$ are
continuous, the isotropy $\Iso(G)$ is closed, and in particular each $\Iicl x
\subseteq \Iso(G)$.

\begin{lemma}\label{lem:subgroup}
Suppose
that $\mathcal{C} = \{C_i : i \in S\}$ is a finite family of open bisections
and that $x \in \bigcap_{i \in S} s(C_i)$. For each $i$, let $\gamma_i$
denote the unique element of $C_i x$, and suppose that $\gamma_i \in \Iso(G)$
for all $i \in S$ and that the $\gamma_i$ pairwise commute. Let $H_S$ be the
subgroup of $\Iso(G)x$ generated by $\{\gamma_i : i \in S\}$. Then there is a
neighbourhood $U_S$ of $x$ such that
\begin{enumerate}[label=\textup{(\arabic*)}]
\item\label{it:lem subgroup:integers} for any integers $(m_i)_{i \in S}$ and any $j \in S$ such that
    $\prod_{i \in S} \gamma_i^{m_i} = \gamma_j$, we have $\prod_{i \in S}
    (C_i \cap \Ii)^{m_i} U_S \subseteq C_j$; and
\item\label{it:lem subgroup:hom} for each $y \in U_S$ such that $C_i \cap \Ii y \not= \emptyset$ for
    all $i \in S$, there is a homomorphism $q_y \colon H_S \to \Ii y$ such that
    $q_y(\gamma_i) \in C_i \cap \Ii y$ for all $i \in S$.
\end{enumerate}
\end{lemma}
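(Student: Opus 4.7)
The plan is to leverage the finite generation of the abelian group $H_S$ so that the infinitely many relations in~\ref{it:lem subgroup:integers} collapse to a finite list. Because the $\gamma_i$ pairwise commute, the assignment $(m_i)_{i \in S} \mapsto \prod_i \gamma_i^{m_i}$ is a surjective group homomorphism $\phi \colon \mathbb{Z}^S \to H_S$; its kernel $K$, being a subgroup of the finitely generated free abelian group $\mathbb{Z}^S$, is itself finitely generated, and I fix generators $k^{(1)}, \dots, k^{(N)}$ of $K$. (The main case is when $\gamma_i \in \Ii$ for every $i$; otherwise the sets $C_i \cap \Ii$ are either empty or avoid $x$ in their source, and the relevant conclusions hold for degenerate reasons.)

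For each $l$, consider the open set $B_l := \prod_i (C_i \cap \Ii)^{k^{(l)}_i}$, which is well defined because $\Ii$ is abelian (Remark~\ref{rmk:products}) and which is in fact an open bisection in $\Ii$. Since $\prod_i \gamma_i^{k^{(l)}_i} = x$, the unit $x$ lies in $B_l$; because $G^{(0)}$ is open in the étale groupoid $G$, $V_l := B_l \cap G^{(0)}$ is an open neighbourhood of $x$ in $G^{(0)}$, and I take $U_S := \bigcap_{l=1}^N V_l$. The technical core of the argument is the claim: \emph{for every $y \in U_S$ and every $k \in K$, the open bisection $B(k) := \prod_i (C_i \cap \Ii)^{k_i}$ satisfies $B(k) y = \{y\}$.} To see it, write $k = \sum_l n_l k^{(l)}$; by the abelianness of $\Ii$, $B(k) = \prod_l B_l^{n_l}$. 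Since $y \in V_l \subseteq B_l$ and $B_l$ is a bisection, $y$ is the unique element of $B_l$ with source $y$, so $B_l y = \{y\}$; similarly $y = y^{-1} \in B_l^{-1}$ gives $B_l^n y = \{y\}$ for every $n \in \mathbb{Z}$, and iterating over the finitely many factors $B_l^{n_l}$ yields $B(k) y = \{y\}$.

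From this key claim, both conclusions follow directly. For~\ref{it:lem subgroup:integers}: if $\prod_i \gamma_i^{m_i} = \gamma_j$, then $(m_i)_{i \in S} - e_j \in K$, so $\prod_i (C_i \cap \Ii)^{m_i} = (C_j \cap \Ii) \cdot B((m_i) - e_j)$; for $y \in U_S$, applying this to $y$ reduces to $(C_j \cap \Ii)\{y\}$, which is either empty or the singleton $\{\alpha_j(y)\} \subseteq C_j$ (where $\alpha_j(y)$ is the unique element of $C_j \cap \Ii y$ when it exists). For~\ref{it:lem subgroup:hom}: fix $y \in U_S$ with $C_i \cap \Ii y \ne \emptyset$ for every $i$, and let $\alpha_i(y)$ denote the unique element of $C_i \cap \Ii y$. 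The map $\tilde q_y \colon \mathbb{Z}^S \to \Ii y$ given by $\tilde q_y((m_i)) := \prod_i \alpha_i(y)^{m_i}$ is a homomorphism (the $\alpha_i(y)$ commute in the abelian group $\Ii y$), and for $k \in K$, $\tilde q_y(k)$ is the unique source-$y$ arrow in $B(k)$, which equals $y$ by the key claim; hence $\tilde q_y$ vanishes on $K$ and factors through $\phi$ as the required $q_y \colon H_S \to \Ii y$ with $q_y(\gamma_i) = \alpha_i(y) \in C_i \cap \Ii y$.

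The main obstacle is that~\ref{it:lem subgroup:integers} demands control of infinitely many relations simultaneously. The resolution is the finite generation of $K$ (reducing the task to the finitely many bisections $B_l$) combined with the abelianness of $\Ii$ (which allows an arbitrary $B(k)$ to be rewritten as $\prod_l B_l^{n_l}$ with factors freely commuting); the étale hypothesis enters through the openness of $G^{(0)}$, which is what lets each $V_l$ be an open neighbourhood of $x$.
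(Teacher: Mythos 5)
Your presentation of $H_S$ as $\mathbb{Z}^S/K$ and reduction to the finitely many generating relations of $K$ is an attractive alternative to the paper's route (which instead invokes the fundamental theorem of finitely generated abelian groups and builds neighbourhoods of a generating set $\eta_1,\dots,\eta_{t+f}$), and in the special case where every $\gamma_i$ lies in $\Ii$ your argument is essentially correct. The gap is the parenthetical sentence disposing of the complementary case. The lemma assumes only $\gamma_i\in\Iso(G)$, and in the application (Corollary~\ref{cor:all the subgroups}, hence Lemma~\ref{lem:editing g}) the $\gamma_i$ lie in $\Iicl x$, which is typically strictly larger than $\Ii x$ --- the whole difficulty the paper is correcting stems from $\Ii$ failing to be closed. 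If $\gamma_i\in\Iicl x\setminus\Ii x$, then $C_i\cap\Ii$ is a \emph{nonempty} open bisection whose source omits $x$ but accumulates at $x$; hence $\prod_i(C_i\cap\Ii)^{m_i}U_S$ is nonempty for every neighbourhood $U_S$ of $x$, so conclusion~\ref{it:lem subgroup:integers} is not vacuous, and neither is conclusion~\ref{it:lem subgroup:hom}: there are $y$ arbitrarily close to $x$ with every $C_i\cap\Ii y\neq\emptyset$, and the existence of $q_y$ forces the elements $\alpha_i(y)$ to satisfy every relation holding among the $\gamma_i$ (for instance torsion relations), which is a genuine constraint that must be arranged by shrinking the neighbourhood. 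In this case your construction fails at the first step: since $\gamma_i\notin C_i\cap\Ii$, the unit $x=\prod_i\gamma_i^{k^{(l)}_i}$ need not lie in $B_l=\prod_i(C_i\cap\Ii)^{k^{(l)}_i}$, so $V_l=B_l\cap G^{(0)}$ --- and therefore $U_S$ --- need not be a neighbourhood of $x$, and the key claim has no starting point.

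The repair is to define the control sets from the full bisections rather than from $C_i\cap\Ii$: put $W_l:=\bigcap_{S=\{i_1,\dots,i_{|S|}\}}\prod_{j}C_{i_j}^{k^{(l)}_{i_j}}$, the intersection over all enumerations of $S$ (needed because the sets $C_i$, unlike the $\gamma_i$, need not commute). Since the $\gamma_i$ commute, $x=\prod_j\gamma_{i_j}^{k^{(l)}_{i_j}}\in W_l$ for every enumeration, so $V_l:=W_l\cap G^{(0)}$ really is an open neighbourhood of $x$; and for $y\in\bigcap_l V_l$ at which the elements $\alpha_i(y)\in C_i\cap\Ii y$ exist, the commuting product $\prod_i\alpha_i(y)^{k^{(l)}_i}$ lies in $W_l y=\{y\}$, which restores your key claim and with it the rest of your argument verbatim. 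This enumeration-intersection device is exactly the paper's~\eqref{eq:all intersections}; once it is in place, your approach (kill the kernel $K$) and the paper's (choose generators adapted to the torsion) both succeed, and yours is arguably the more economical.
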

\begin{proof}
Since $S$ is finite and the $\gamma_i$ commute, $H_S$ is a finitely
generated abelian group. So by the fundamental theorem of finitely generated
abelian groups, there are integers $f, t \ge 0$, elements $\eta_1, \dots,
\eta_{t+f}$ of $H_S$, and strictly positive integers $O_1, \dots, O_t$ such
that $(a_1, \dots, a_{t+f}) \mapsto \prod^{t+f}_{k=1} \eta_k^{a_k}$ is an
isomorphism $\big(\bigoplus_{k \le t} \mathbb{Z}/O_k\mathbb{Z}\big) \oplus
\mathbb{Z}^f \to H_S$. So $H_S$ is generated by $\{\eta_1, \dots,
\eta_{t+f}\}$, and if $\{\zeta_1, \dots, \zeta_{t+f}\}$ are elements of a group
$H'$ that pairwise commute and satisfy $\zeta_k^{O_k} = e_{H'}$ for all $1 \le
k \le t$, then $\eta_k \mapsto \zeta_k$ extends to a homomorphism $H_S \to H'$.

For every $k \le t+f$, since $\eta_k \in H_S$, there is a function $m(k, \cdot) \colon S \to \mathbb{Z}$ such that
\[
    \eta_k = \prod_{i \in S} \gamma_i^{m(k,i)},
\]
using Remark~\ref{rmk:products}. Let
\begin{equation}\label{eq:all intersections}
V_k := \bigcap_{S = \{i_1, \dots, i_{|S|}\}} \prod_{j = 1}^{|S|} C_{i_j}^{m(k,i_j)},
\end{equation}
the intersection over all enumerations of $S$. Since it is an intersection of products of open bisections, $V_k$ is itself an open bisection
neighbourhood of $\eta_k$.

For each $k \le t$, since the unit space of $G$ is open and since
multiplication is continuous, we can shrink $V_k$ to a smaller neighbourhood
of $\eta_k$ such that $V_k^{O_k} \subseteq G^{(0)}$. Shrinking $V_k$ further
by replacing it with the open subset $V_k \cap s^{-1}(V_k^{O_k})$, we can
assume that $V_k^{O_k} = s(V_k)$. Hence $V_k V^{O_k - 1} = s(V_k)$. Since the
open bisections of $G$ form an inverse semigroup $S(G)$
\cite[Proposition~2.2.4]{Paterson}, uniqueness of inverses in inverse
semigroups \cite[page~21]{Paterson} implies that
$V_k^{O_k - 1}$ is the inverse $V_k^* =
\{\alpha^{-1} : \alpha \in V_k\}$
of $V_k$ in $S(G)$.

We have chosen $V_k \owns \eta_k$ for $k \le t+f$ such that for each $k$
and each possible enumeration $S = \{i_1, \dots, i_{|S|}\}$, we have $V_k
\subseteq \prod_{j = 1}^{|S|} C_{i_j}^{m(k,i_j)}$, and such that, in
addition,
\begin{equation}\label{eq:r,s on torsion}
r(V_k) = V_k V_k^* = V_k V_k^{O_k - 1} = V_k^{O_k} = s(V_k)
\quad\text{for }k \le t.
\end{equation}

For each $i \in S$, since $\gamma_i \in H_S$, there is a function $b(i, \cdot)
: \{1, \dots, t+f\} \to \mathbb{Z}$ such that \begin{align}\label{eq:gamma_i in terms of eta_k}
\prod^{t+f}_{k=1}
\eta_k^{b(i,k)}
=
\gamma_i \quad\text{ for each $i \in S$;}
\end{align}
if $i \in S$ satisfies $\gamma_i = x$, then we take $b(i,k) = 0$ for all $k$. For any $i \in S$,
both $C_i$ and $\prod^{t+f}_{k=1} V_k^{b(i,k)}$ are open
bisection neighbourhoods of $\gamma_i$; so we can further shrink each $V_k$
to a smaller neighbourhood of $\eta_k$ to ensure that
\begin{equation}\label{eq:prod of V_k in C_i}
    \prod^{t+f}_{k=1} V_k^{b(i,k)} \subseteq C_i\quad\text{ for each $i \in S$.}
\end{equation}
Now fix $y\in \bigcap_{i\in S}s(C_i)$, so there is a unique element in $C_i y$ for each $i\in S$, and fix $k\leq t+f$. If $y$ satisfies $C_i y \subseteq \Ii$ for each $i \in S$ satisfying $m(k,i) \not= 0$, then
the unique elements of
$(C_i y)^{m(k,i)}$ for $i\in S$ pairwise commute because $\Ii$ is abelian; so $\prod_{i\in S}(C_i y)^{m(k,i)}$ is well-defined, and is a singleton. For any enumeration $S=\{i_{1},\ldots,i_{|S|}\}$, the set $\prod_{j=1}^{|S|} C_{i_j}^{m(k,i_j)}$ contains $\prod_{i\in S}(C_i y)^{m(k,i)}$. Since this set is a bisection, we deduce that
\begin{equation}\label{eq:BiI is V}
    \prod_{i\in S}(C_i y)^{m(k,i)}
    =
    \Bigl(\bigcap_{S=\{i_1,\ldots,i_{|S|}\}}\prod_{j=1,\ldots,|S|} C_{i_j}^{m(k,i_j)}\Bigr)y
    \overset{\eqref{eq:all intersections}}{=} V_{k}y.
\end{equation}

Since each $\eta_k \in V_k \cap H_S$, we have $x \in \bigcap^{t+f}_{k=1}
s(V_k)$, so~\eqref{eq:prod of V_k in C_i} implies that there is a neighbourhood $U_S \subseteq \big(\bigcap^{t+f}_{k=1}
s(V_k)\big) \cap \big(\bigcap_{i \in S} s(C_i)\big)$ of $x$ such that
\begin{equation}\label{eq:Vk small enough}
\Big(\prod^{t+f}_{k=1} V_k^{b(i,k)}\Big) U_S = C_i U_S\quad\text{ for each $i \in S$.}
\end{equation}
For each $j \in S$ satisfying $\gamma_j = x$, we took $b(j,k) = 0$ for all $k$; hence for such $j$, Equation~\eqref{eq:Vk small enough} becomes
$U_S = \Big(\prod^{t+f}_{k=1} s(V_k)\Big) U_S =  C_j U_S$. In particular, $C_j U_S \subseteq G^{(0)}$.

We show that $U_S$ satisfies \ref{it:lem subgroup:integers}~and~\ref{it:lem subgroup:hom}.

For~\ref{it:lem subgroup:integers}, suppose that $\prod_{i \in S} \gamma_i^{m_i} = \gamma_j$. Fix $y \in
U_S$, and suppose that $\prod_{i\in S}(C_i\cap \Ii)^{m_i}y\neq \emptyset$; we must show that this set is contained in $C_{j}$.
As $\prod_{i\in S}(C_i\cap \Ii)^{m_i}y\neq \emptyset$, each $(C_i\cap \Ii)^{m_i}y$ is non-empty. Since
$C_i\cap\Ii\subseteq \Iso(G)$, we have $(C_i\cap \Ii)^{m_i}y = (C_i\cap \Ii y)^{m_i}$, and so for each $i\in S$,
the set $C_i\cap \Ii y$ is non-empty; we let $\beta_i$ denote its unique element so that
\[
\prod_{i\in S}(C_i\cap \Ii)^{m_i}y = \Big\{\prod_{i \in S} \beta_i^{m_i}\Big\}.
\]

Since each $\eta_k$ is an element of the abelian group $H_{S}$, Equation~\eqref{eq:gamma_i in terms of eta_k} implies that
$\gamma_i^{m_i} = \prod^{t+f}_{k=1} \eta_k^{m_i b(i,k)}$ for all $i$. Since, by assumption, $\prod_i \gamma_i^{m_i} = \gamma_j$,
\[
\prod^{t+f}_{k=1} \eta_k^{\sum_{i\in S} m_i b(i,k)}
    = \prod_{i \in S} \Big(\prod^{t+f}_{k=1} \eta_k^{m_i b(i,k)}\Big)
    = \prod_{i \in S} \gamma_i^{m_i}
    = \gamma_j
    = \prod^{t+f}_{k=1} \eta_k^{b(j,k)}.
\]
Since $(a_1, \dots, a_{t+f}) \mapsto \prod^{t+f}_{k=1} \eta_k^{a_k}$ is an
isomorphism $\big(\bigoplus_{k \le t} \mathbb{Z}/O_k\mathbb{Z}\big) \oplus
\mathbb{Z}^f \to H_S$, there are integers $x_k, k \le t$ such that
\begin{align}
b(j,k)&= x_k O_k + \sum_{i \in  S } m_i b(i,k)\qquad\text{for $k \le t$, and}\label{eq:klet case} \\
b(j,k) &= \sum_{i \in  S } m_i b(i,k)\qquad\text{for $k > t$.}\label{eq:k>t case}
\end{align}
We have
\begin{equation}\label{eq:torsion}
V_k^{x_k O_k} = (V_k^{O_k})^{x_k} = s(V_k)^{x_k} = s(V_k) \supseteq U_S\quad\text{ for $k \le t$.}
\end{equation}
Since $C_{i}y\subseteq \Ii$ for all $i\in S$ and since $\Ii$ is abelian, for
any enumeration $S = \{i_1, \dots, i_{|S|}\}$ of $S$ and any integers
$n_{1},\ldots,n_{|S|}$, we have
\[
\Big(\prod^{|S|}_{j=1} C_{i_j}^{n_{i_j}}\Big)y
    = \prod_{i\in S} (C_{i}y)^{n_i}
    = \Big\{ \prod_{i\in S} \beta_{i}^{n_i}\Big\}.
\]
So~\eqref{eq:all intersections} and that $\Ii$ is closed under multiplication
imply that for each $k\le t+f$, defining $\zeta_k := \prod_{i \in S}
\beta_i^{m(k,i)} \in  \Ii y$, we have
\begin{equation}\label{eq:prod beta i}
V_k y = \{\zeta_k\}.
\end{equation}
Since $\Ii$ is abelian, the $\zeta_k$ pairwise
commute. Since $y\in U_S$, for each $i\in S$,
\begin{align}\label{eq:beta_i in terms of zetas}
    \{\beta_{i}\}
    =
    C_{i} y
    \overset{\eqref{eq:Vk small enough}}{=}
    \Big(\prod^{t+f}_{k=1} V_k^{b(i,k)}\Big) y
    =
    \prod^{t+f}_{k=1} (V_k y)^{b(i,k)}
    \overset{\eqref{eq:prod beta i}}{=}
    \Big\{ \prod^{t+f}_{k=1} \zeta_k^{b(i,k)}\Big\}.
\end{align}
Using again that $\Ii$ is abelian, we obtain
\begin{align}\label{eq:prod of betas in terms of zetas}
    \prod_{i \in  S } \beta_i^{m_i}
    =
    \prod_{i \in  S } \Big( \prod^{t+f}_{k=1} \zeta_k^{b(i,k)}\Big)^{m_i}
    =
    \prod^{t+f}_{k=1} \zeta_k^{\sum_{i\in S} m_i b(i,k)}
    .
\end{align}
Consequently,
\begin{align*}
\prod_{i \in  S } (C_i\cap\Ii)^{m_i}y
    &= \Big\{ \prod^{t+f}_{k=1} \zeta_k^{\sum_{i\in S} m_i b(i,k)} \Big\}\\
    &\subseteq \Big(\prod^{t+f}_{k = 1} V_k^{\sum_{i \in  S } m_i b(i,k)}\Big) U_S
    = \Big(\prod^t_{k=1} V_k^{\sum_{i \in  S } m_i b(i,k)}\Big) \Big(\prod^{t+f}_{k=t+1} V_k^{\sum_{i \in  S } m_i b(i,k)}\Big) U_S.
\end{align*}
By~\eqref{eq:torsion}, for each $k \le t$ we have
\[
V_k^{\sum_{i \in  S } m_i b(i,k)}
    = V_k^{\sum_{i \in  S } m_i b(i,k)} V_k^{x_k O_k}
    = V_k^{x_k O_k + \sum_{i \in  S } m_i b(i,k)}.
\]
Hence
\[
\prod_{i \in S } (C_i\cap\Ii)^{m_i}y
    = \Big(\prod^t_{k=1} V_k^{x_k O_k + \sum_{i \in  S } m_i b(i,k)}\Big) \Big(\prod^{t+f}_{k=t+1} V_k^{\sum_{i \in  S } m_i b(i,k)}\Big) U_S.
\]
Now applying~\eqref{eq:klet case} to the first product and~\eqref{eq:k>t
case} to the second, we obtain
\[
\prod_{i \in S } (C_i\cap\Ii)^{m_i}y
        = \Big(\prod^t_{k=1} V_k^{b(j,k)}\Big) \Big(\prod^{t+f}_{k=t+1} V_k^{b(j,k)}\Big) U_S
        = \prod^{t+f}_{k=1} V_k^{b(j,k)}.
\]
Hence~\eqref{eq:Vk small enough} gives
\[
\prod_{i \in  S } (C_i\cap\Ii)^{m_i}y
    = C_j U_S.
\]
For~\ref{it:lem subgroup:hom}, suppose that $y \in U_S$ and that $C_i \cap
\Ii y \not= \emptyset$ for each $i \in S$, and denote its unique element by
$\beta_i$. We must show that there is a homomorphism $q_y \colon H_S \to \Ii
y$ such that $q_y(\gamma_i) = \beta_i$ for all $i \in S$. The elements
$\{\zeta_k\} = V_k y$ defined at~\eqref{eq:prod beta i} belong to the group
$\Ii y$ and pairwise commute because $\Ii$ is abelian. For $k \le t$, we have
$\zeta_k^{O_k} \in V_k^{O_k}$ by~\eqref{eq:prod beta i}, and $V_k^{O_k}
\subseteq G^{(0)}$ by definition of $O_k$, so $\zeta_k^{O_k} = y$ is the
identity element of $\Ii y$. So the universal property of $H_S$ yields a
homomorphism $q_y \colon H_S \to \Ii y$ such that $q_y(\eta_k) = \zeta_k$ for
all $k \le t + f$. Using~\eqref{eq:gamma_i in terms of eta_k} at the
first equality and~\eqref{eq:beta_i in terms of zetas} at the third, we see
that, for each $i \in S$,
\[
q_y(\gamma_i)
    = \prod^{t+f}_{k=1} q_y(\eta_k)^{b(i,k)}
    = \prod^{t+f}_{k=1} \zeta_k^{b(i,k)}
    = \beta_{i}.\qedhere
\]
\end{proof}

\begin{corollary}\label{cor:all the subgroups}
Suppose
that $\{C_i : i \in I_0\}$ is a finite family of open bisections and that $x
\in \bigcap_{i \in I_0} s(C_i)$. For each $i$, let $\gamma_i$ denote the
unique element of $C_i x$, and suppose that $\gamma_i\in\Iso(G)$ for all $i
\in I_0$. Then there exists a neighbourhood $U_0$ of $x$ such that
\begin{enumerate}[label=\textup{(\roman*)}]
\item\label{it:cor subgps:K_y} for each $y \in U_0$, the set $\{\gamma_i : C_i  \cap \Ii y \not=
    \emptyset\}$ is a commutative subset of $G$ and the subgroup $K_y$ of
    $\Iso(G)x$ that it generates is contained in $\Iicl x$;
\item\label{it:cor subgps:integers} for any $y\in U_0$, any $j \in I_0$,
    any nonemtpy $S\subseteq \{i\in I_0 : C_i \cap \Ii y \not=
    \emptyset\}$, and any integers $(m_i)_{i \in S}$ such that $\prod_{i
    \in S} \gamma_i^{m_i} = \gamma_j$, we have $\prod_{i \in S} (C_i \cap
    \Ii)^{m_i} U_0 \subseteq C_j$; and
\item\label{it:cor subgps:hom} for each $y \in U_0$, there is a
    homomorphism $q_y$ from the subgroup $K_y$ of $\Iso(G) x$ generated by
    $\{\gamma_i : C_i \cap \Ii y \not= \emptyset\}$ to $\Ii y$ such that
    for each $i \in I_0$ satisfying $C_i \cap \Ii y \not= \emptyset$, we
    have $q_y(\gamma_i) \in C_i y$.
\end{enumerate}
\end{corollary}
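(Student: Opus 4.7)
The plan is to obtain $U_0$ as a finite intersection of neighbourhoods of $x$, each tailored to secure one of the three conclusions, and then verify them in turn for $y \in U_0$.

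\emph{Setup.} Let $\mathcal{S}$ be the (finite) collection of subsets $S \subseteq I_0$ for which $\{\gamma_i : i \in S\}$ pairwise commutes. For each $S \in \mathcal{S}$, Lemma~\ref{lem:subgroup} produces a neighbourhood $U_S$ of $x$ satisfying \ref{it:lem subgroup:integers}~and~\ref{it:lem subgroup:hom}. A small inspection of the Lemma's proof shows that the bisections $V_k$ may be shrunk further so that the conclusion of \ref{it:lem subgroup:integers} extends to every $j \in I_0$ with $\gamma_j \in H_S$, not just $j \in S$: it suffices to impose the additional inclusions $\prod_{k} V_k^{b(j,k)} U_S \subseteq C_j$ for such $j$ (decomposing $\gamma_j = \prod_k \eta_k^{b(j,k)}$), which are finitely many. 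Next, for each $i \in I_0$ with $\gamma_i \notin \Iicl$, pick an open neighbourhood $W_i$ of $x$ such that $C_i y \notin \Ii$ for every $y \in W_i$; this is possible because $\gamma_i$ lies in the open set $G \setminus \Iicl$ and $s|_{C_i}$ is a homeomorphism. Finally, for each $S \subseteq I_0$ that is \emph{unrealizable}, meaning $x \notin \overline{\bigcap_{i \in S} s(C_i \cap \Ii)}$, choose a neighbourhood $N_S$ of $x$ disjoint from $\bigcap_{i \in S} s(C_i \cap \Ii)$. Define
\[
U_0 := \Bigl(\bigcap_{S \in \mathcal{S}} U_S\Bigr)
    \cap \Bigl(\bigcap_{i \,:\, \gamma_i \notin \Iicl} W_i\Bigr)
    \cap \Bigl(\bigcap_{S \text{ unrealizable}} N_S\Bigr)
    \cap \bigcap_{i \in I_0} s(C_i).
\]

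\emph{Verification for $y \in U_0$.} Write $T(y) := \{i : C_i \cap \Ii y \neq \emptyset\}$. Because $T(y) \subseteq T(y)$, the set $T(y)$ itself cannot be unrealizable, so there is a net $y_\lambda \to x$ with $T(y_\lambda) \supseteq T(y)$. For any $i, j \in T(y)$, the elements $\beta_i^{(\lambda)} := C_i y_\lambda$ and $\beta_j^{(\lambda)} := C_j y_\lambda$ lie in the abelian group $\Ii y_\lambda$ and hence commute; since $\beta_i^{(\lambda)} \to \gamma_i$ and $\beta_j^{(\lambda)} \to \gamma_j$ by the bisection property, continuity of multiplication gives $\gamma_i \gamma_j = \gamma_j \gamma_i$, so $T(y) \in \mathcal{S}$. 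The $W_i$-constraint also forces $\gamma_i \in \Iicl x$ for each $i \in T(y)$, establishing the first half of~\ref{it:cor subgps:K_y}. Statement~\ref{it:cor subgps:hom} then follows directly from Lemma~\ref{lem:subgroup}\ref{it:lem subgroup:hom} applied with $S = T(y) \in \mathcal{S}$, using $U_0 \subseteq U_{T(y)}$. Statement~\ref{it:cor subgps:integers} follows from the extended form of Lemma~\ref{lem:subgroup}\ref{it:lem subgroup:integers} applied to the given $S \subseteq T(y) \in \mathcal{S}$ and the prescribed $j \in I_0$, noting $\gamma_j = \prod_i \gamma_i^{m_i} \in H_S$.

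\emph{Main obstacle: $K_y \subseteq \Iicl x$.} The subtlest point of~\ref{it:cor subgps:K_y} is the containment $K_y \subseteq \Iicl x$. Given $\kappa \in K_y = H_{T(y)}$, Lemma~\ref{lem:subgroup}'s construction with $S = T(y)$ yields $\eta_k$, $V_k$, and integers $c_k$ such that $\kappa = \prod_k \eta_k^{c_k}$ is the unique element of the open bisection $\prod_k V_k^{c_k}$ over $x$. For the net $y_\lambda \to x$ above, the unique element $\prod_k V_k^{c_k} y_\lambda$ is the value $q_{y_\lambda}(\kappa) \in \Ii y_\lambda$, and the bisection structure forces $q_{y_\lambda}(\kappa) \to \kappa$. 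Since each $q_{y_\lambda}(\kappa) \in \Ii$, we conclude $\kappa \in \overline{\Ii} \cap Gx = \Iicl x$.
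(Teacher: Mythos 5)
Your proposal is correct, and its architecture is essentially the paper's: your ``unrealizable'' sets and the neighbourhoods $N_S$ are exactly the paper's sets $V_S$, your nets $y_\lambda \to x$ with $T(y_\lambda) \supseteq T(y)$ are exactly property~\eqref{eq:Toke is clever}, and the limit arguments for commutativity and for $K_y \subseteq \Iicl x$ are the same in substance (though the paper gets $K_y \subseteq \Iicl x$ more directly, from $\prod_i \beta_{i,\lambda}^{m_i} \to \prod_i \gamma_i^{m_i}$ with each $\prod_i \beta_{i,\lambda}^{m_i} \in \Ii$, without routing through $q_{y_\lambda}$ and the internal bisections $V_k$ of Lemma~\ref{lem:subgroup}). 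The one genuine divergence is your treatment of~\ref{it:cor subgps:integers} when $j \notin S$: you strengthen Lemma~\ref{lem:subgroup}\ref{it:lem subgroup:integers} to allow any $j \in I_0$ with $\gamma_j \in H_S$, which requires reopening the lemma's proof to impose the finitely many extra inclusions $\prod_k V_k^{b(j,k)} U_S \subseteq C_j$ (and, when $\gamma_j = x$, shrinking $U_S$ into $C_j \cap G^{(0)}$). That modification is valid, and you identify correctly what must be added, but it is not the lemma as stated. The paper instead keeps the lemma as a black box: it uses the net to see that $\prod_{i \in S} \beta_{i,\lambda}^{m_i} \in C_j \cap \Ii z_\lambda$ for large $\lambda$, so that $S \cup \{j\}$ is itself realizable (hence its $\gamma$'s commute and the lemma applies to it), and then invokes Lemma~\ref{lem:subgroup}\ref{it:lem subgroup:integers} for $S \cup \{j\}$ with $j$ now in the index set. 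Your route trades that enlargement step for a re-examination of the lemma's proof; the paper's route costs one more use of the net but uses the lemma verbatim. A cosmetic difference: your $\mathcal{S}$ is the collection of all commuting subsets rather than all realizable ones --- larger, but still finite, so the intersection defining $U_0$ remains an open neighbourhood of $x$.
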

\begin{proof}
Fix a subset $S$ of $I_0$. If there exists a neighbourhood $W$ of $x$ such
that for every $y \in W$, there exist $i \in S$ such that $C_i \cap \Ii y =
\emptyset$, let $V_S$ be such a neighbourhood; otherwise put $V_S = G^{(0)}$.
Let $V := \bigcap_{S \subseteq I_0} V_S$. Then $V$ is an open neighbourhood
of $x$. For $y\in V$, let
\[
    S_y := \{i \in I_0: C_i \cap \Ii y \not= \emptyset\}.
\]
By definition of the sets $V_S$,
\begin{equation}\label{eq:Toke is clever}
\parbox{0.9\textwidth}{if $y \in V$ and $S\subseteq S_y$, then every neighbourhood
of $x$ contains a point $z$ such that $C_i \cap \Ii z\not= \emptyset$ for
all $i \in S$.}
\end{equation}
This implies that for each $y \in V$ with $S_y\neq\emptyset$, there is a net
$(z_\lambda)_{\lambda \in \Lambda_y}$ converging to $x$ such that $C_i \cap
\Ii z_\lambda \not= \emptyset$ for all $i \in S_y$ and all $\lambda \in
\Lambda_y$; for each $i \in S_y$ and $\lambda \in \Lambda_y$ we define
$\beta_{i,\lambda}$ be the unique element of $C_i \cap \Ii z_\lambda$. Since
each $C_i$ is a bisection and hence each $s|_{C_i}$ is a homeomorphism, for each $i \in S_y$,
\begin{equation}\label{eq:beta_lambda to gamma}
C_i
\cap \Ii z_\lambda \ni \beta_{i,\lambda} \to \gamma_i.
\end{equation}
We claim that $V$ satisfies~\ref{it:cor subgps:K_y}; it then follows that any
open subset $U_0$ of $V$ also satisfies~\ref{it:cor subgps:K_y}. Fix $y \in
V$; we claim that $\{\gamma_i : i\in S_y\}$ is a set of pairwise commuting
elements of $G$. If $S_y$ is empty, the claim is vacuous, so suppose that
$S_y\neq\emptyset$ and fix $i,j \in S_y$. Then for each $\lambda \in
\Lambda_y$ we have $\beta_{i, \lambda}, \beta_{j,\lambda} \in \Ii z_\lambda$,
and in particular $\beta_{i, \lambda} \beta_{j,\lambda} = \beta_{j, \lambda}
\beta_{i,\lambda}$ because $\Ii$ is abelian. So using~\eqref{eq:beta_lambda
to gamma} twice, we see that
\[
\gamma_i \gamma_j
    = \lim_{\lambda \in \Lambda_y} \beta_{i,\lambda}\beta_{j,\lambda}
    = \lim_{\lambda \in \Lambda_y} \beta_{j,\lambda}\beta_{i,\lambda} = \gamma_j\gamma_i.
\]
Hence $\{\gamma_i : i\in S_y \}$ is a set of pairwise commuting elements. For
the remaining assertions in~\ref{it:cor subgps:K_y}, suppose that $\gamma$
belongs to $K_y$. Then $\gamma$ has the form $\prod_{i \in S_y}
\gamma_i^{m_i}$, and so $\lim_\lambda\prod_{i \in S_y}
\beta_{i,\lambda}^{m_i} =\gamma$. Since $\Ii$ is closed under multiplication,
we have $\prod_{i \in S_y} \beta_{i,\lambda}^{m_i} \in \Ii$ for all
$\lambda$, and so $\gamma \in \Iicl x$. Hence $V$ satisfies~\ref{it:cor
subgps:K_y} as claimed.

Let
\[
    \mathcal{S} := \{S \subseteq I_0 : S \not= \emptyset \text{ and there exists }y \in V \text{ such that } S\subseteq S_y \}.
\]
First suppose that $\mathcal{S} = \emptyset$. Then $S_y=\emptyset$ for all
$y\in V$. Hence Condition~\ref{it:cor subgps:integers} is vacuously true for
any $U_0\subseteq V$. Thus Condition~\ref{it:cor subgps:hom} reduces to
asserting the existence of the trivial homomorphism from $\{x\}$ to $\Ii y$,
and so we are done. Now suppose that $\mathcal{S}$ is nonempty. For each $S
\in \mathcal{S}$, since $V$ satisfies Condition~\ref{it:cor subgps:K_y}, we
can apply Lemma~\ref{lem:subgroup} to the bisections $\{C_i V : i \in S\}$ to
obtain a neighbourhood $U_S \subseteq V$ of $x$ satisfying
Conditions~\ref{it:lem subgroup:integers} and~\ref{it:lem subgroup:hom} of
that result. Let $U_0 := \bigcap_{S \in \mathcal{S}} U_S$, which is an open
neighbourhood of $x$.

Since $U_0$ is an open subset of $V$, we saw above that it satisfies
Condition~\ref{it:cor subgps:K_y}. To see that it satisfies
Condition~\ref{it:cor subgps:integers}, fix a point $y\in U_0$, an element $j
\in I_0$, and a nonempty set $S\subseteq \{i\in I_0 : C_i \cap \Ii y \not=
\emptyset\}$, and suppose that $(m_i)_{i \in S}$ are integers satisfying
$\prod_{i \in S} \gamma_i^{m_i} = \gamma_j$. We must show that $\prod_{i \in
S} (C_i \cap \Ii)^{m_i} U_0 \subseteq C_j$.

Let $(z_\lambda)_{\lambda \in \Lambda_y}$ and $\beta_{i,\lambda}\in C_i \cap
\Ii z_\lambda$ for $i\in S_y$ be as above. By~\eqref{eq:beta_lambda to
gamma}, we have $\lim_\lambda \prod_{i \in S} \beta_{i,\lambda}^{m_i} =
\prod_{i \in S} \gamma_i^{m_i} = \gamma_j$. Since $C_j$ is a neighbourhood of
$\gamma_j$, we have $\prod_{i \in S} \beta_{i,\lambda}^{m_i} \in C_j \cap \Ii
z_\lambda$ for large $\lambda$. In particular, for large $\lambda$, the point
$z_{\lambda} \in V$ satisfies $C_i \cap \Ii z_{\lambda} \not= \emptyset$ not
only for $i\in S$ but also for $i=j$. Hence $S \cup \{j\} \in \mathcal{S}$.
Since $U_0 \subseteq U_{S \cup \{j\}}$, Condition~\ref{it:lem
subgroup:integers} of Lemma~\ref{lem:subgroup} for $S \cup \{j\}$ implies
that $\prod_{i \in S} (C_i \cap \Ii)^{m_i} U_0 \subseteq C_j$. Hence $U_0$
satisfies Condition~\ref{it:cor subgps:integers}.

Finally, to see that $U_0$ satisfies Condition~\ref{it:cor subgps:hom}, fix
$y \in U_0 \subseteq U_{S_{y}}$. Condition~\ref{it:lem subgroup:hom} of
Lemma~\ref{lem:subgroup} for $S_{y}$ gives a homomorphism from $H_{S_{y}} =
K_y \le \Iicl x$ to $\Ii y$ with the desired property.
\end{proof}

We will need a simple technical result regarding expressions for elements of
$C_c(G)$ as sums of functions supported on bisections.

\begin{lemma}\label{lem:can rewrite g}
Let $G$ be an \'etale groupoid (for this lemma, $\Ii$ need not be abelian) and fix $g \in C_c(G)$ and $x \in G^{(0)}$.
There is a finite set $\Bb$ of open bisections of $G$ and for each $B \in
\Bb$ a function $g_B \in C_c(B) \subseteq C_c(G)$ such that $g = \sum_{B \in
\Bb} g_B$ and such that if $B, B' \in \Bb$ satisfy $B \cap B' \cap
Gx
\not=
\emptyset$, then $B = B'$.
\end{lemma}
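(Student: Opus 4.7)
My first step is to isolate the ``bad'' points in $\supp(g)$, namely those lying in $Gx$. Because $G$ is étale and Hausdorff, the fibre $Gx = s^{-1}(x)$ is discrete in its subspace topology (every point of $Gx$ has an open bisection neighbourhood meeting $Gx$ only at that point), and it is closed in $G$ since $\{x\}$ is closed in the Hausdorff space $G^{(0)}$ and $s$ is continuous. Intersecting the compact set $\supp(g)$ with this closed, relatively discrete set therefore yields a finite set $\{\gamma_1,\dots,\gamma_k\}$ (possibly empty).

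Next, I choose open bisection neighbourhoods $N_i$ of the $\gamma_i$, shrinking them using Hausdorffness of $G$ so that they are pairwise disjoint. Since $s$ is injective on each $N_i$ and $\gamma_i$ is the only preimage of $x$ in $N_i$, we automatically have $N_i \cap Gx = \{\gamma_i\}$. The residual set $K := \supp(g) \setminus \bigcup_{i=1}^k N_i$ is then compact and disjoint from $Gx$; since $G \setminus Gx$ is open, every $\gamma \in K$ admits an open bisection neighbourhood contained in $G \setminus Gx$, and compactness of $K$ yields a finite cover $M_1, \dots, M_\ell$ of $K$ by open bisections disjoint from $Gx$.

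Finally, I apply the standard partition-of-unity construction on the locally compact Hausdorff space $G$ subordinate to the finite open cover $\Bb := \{N_1, \dots, N_k, M_1, \dots, M_\ell\}$ of $\supp(g)$, obtaining $\phi_B \in C_c(B)$ with $\sum_{B \in \Bb} \phi_B \equiv 1$ on $\supp(g)$. Setting $g_B := g \cdot \phi_B \in C_c(B)$ gives the decomposition $g = \sum_{B \in \Bb} g_B$. The partition condition holds by construction: each $M_j$ is disjoint from $Gx$, while for $i \neq j$ we have $N_i \cap N_j \cap Gx \subseteq \{\gamma_i\} \cap \{\gamma_j\} = \emptyset$, so the only way two elements of $\Bb$ can meet on $Gx$ is to coincide. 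There is no substantial obstacle here; the only point requiring the étale hypothesis is the finiteness of $\supp(g) \cap Gx$ in the first step, after which the argument is a routine topological decomposition.
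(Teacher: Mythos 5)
Your proof is correct, and it reaches the decomposition by a different route than the paper. The paper first takes an arbitrary partition-of-unity decomposition $g = \sum_{A \in \Aa} f_A$ over some finite family of open bisections, and then \emph{repairs} it: for each of the finitely many $\gamma \in \big(\bigcup\Aa\big)x$ it merges the bisections containing $\gamma$ into their intersection $B(\gamma)$, uses a bump function $h_\gamma$ equal to $1$ near $\gamma$ to concentrate the relevant summands onto $B(\gamma)$, and shunts the leftovers $(1-h_\gamma)\cdot f_A$ onto the bisections $A \setminus \{\gamma\}$, which miss $Gx$ entirely. You instead build the cover correctly from the outset: you observe that $Gx$ is closed and discrete (so $\supp(g) \cap Gx$ is finite), separate those finitely many points by pairwise disjoint bisections $N_i$, cover the compact remainder by bisections avoiding $Gx$, and take a single partition of unity subordinate to the resulting cover. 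Both arguments are elementary and about equally long; yours has the advantage that the disjointness-on-$Gx$ condition holds for the cover itself rather than only for the supports of the summands, and it isolates cleanly the one place the \'etale hypothesis enters (finiteness of $\supp(g)\cap Gx$), whereas the paper's finiteness comes for free from the finiteness of the initial cover $\Aa$ and the repair step is where the bisection structure does its work. One cosmetic point: you should note (as you implicitly do) that no $N_i$ can coincide with any $M_j$, since the former meet $Gx$ and the latter do not, so the functions $g_B$ are unambiguously indexed by the set $\Bb$.
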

\begin{proof}
The standard partition-of-unity argument shows that there is a finite set
$\Aa$ of open bisections and for each $A \in \Aa$ a function $f_A \in C_c(A)$
such that $g = \sum_{A \in \Aa} f_A$.

For each of the finitely many $\gamma \in \big(\bigcup \Aa\big) x$, let
$\Aa_\gamma := \{A \in \Aa : \gamma \in A\}$. Then
the set
$B(\gamma) := \bigcap \Aa_\gamma$ is an open bisection containing $\gamma$. Fix
$h_\gamma \in C_c(B(\gamma), [0,1])$ such that $h_\gamma$ is identically~1 on a
neighbourhood of $\gamma$. Using $\cdot$ to denote pointwise multiplication in
$C_c(G)$, define
\[
g_{B(\gamma)} := \sum_{A \in \Aa_\gamma} h_\gamma\cdot f_A \in C_c(G).
\]
For each $A \in \Aa_\gamma$, the set $B(A) :=	A\setminus ( A\cap Gx) =
A\setminus\{\gamma\}$ is an open bisection that does not meet $Gx$; we put
$g_{B(A)} = (1 - h_\gamma)\cdot f_A$. Since $h_\gamma$ is identically~1 on a
neighbourhood of $\gamma$, each $g_{B(A)}$ vanishes on a neighbourhood of
$\gamma$ and so by choice of $f_A$, we have $g_{B(A)} \in C_c(B(A))$. By
construction, $\sum_{A \in \Aa_\gamma} f_A = g_{B(\gamma)} + \sum_{A \in
\Aa_\gamma} g_{B(A)}$. Since each $A \in \Aa$ is a bisection, the
$\Aa_\gamma$ are mutually disjoint, so
\[
\sum_{A \in \Aa, Ax \not= \emptyset} f_A
    = \sum_{\gamma \in (\bigcup \Aa)x}\Big(\sum_{A \in \Aa_\gamma} f_A\Big)
    = \sum_{\gamma \in (\bigcup \Aa)x} \Big(g_{B(\gamma)} + \sum_{A \in \Aa_\gamma} g_{B(A)}\Big).
\]
For $A \in \Aa$ such that $A x = \emptyset$, let $B(A) = A$ and $g_A = f_A$.
Let $\Bb := \{B(\gamma) : \gamma \in \big(\bigcup \Aa\big) x\} \cup \{B(A) :
A \in \Aa\}$. Since $\Aa$ is finite, so is $\Bb$. We have
\[
g = \sum_{A \in \Aa} f_A
    =
    \Big(\sum_{A \in \Aa, Ax = \emptyset} f_A\Big)
    +
    \sum_{\gamma \in (\bigcup \Aa)x} \Big(g_{B(\gamma)} + \sum_{A \in \Aa_\gamma} g_{B(A)}\Big)
    = \sum_{B \in \Bb} g_B.
\]
Suppose that $B, B' \in \Bb$ satisfy $B \cap B' \cap
Gx
 \not= \emptyset$.
Then in particular $B \cap Gx
$ and $B' \cap Gx
$ are nonempty. By
construction, $B(A) \cap Gx
 = \emptyset$ for all $A \in \Aa$, so there exist
$\gamma,\gamma' \in \big(\bigcup \Aa\big) x$ such that $B = B(\gamma)$ and $B'
= B(\gamma')$. Hence
$B \cap Gx = \{\gamma\}$ and $B' \cap Gx =
\{\gamma'\}$, so $B \cap B' \cap G x \not= \emptyset$ forces $\gamma =
\gamma'$  and thus
$B = B(\gamma) = B(\gamma') = B'$.
\end{proof}

\begin{lemma}\label{lem:editing g}
Fix $x \in
G^{(0)}$, $g \in C_c(G)$ and $\varepsilon > 0$. There is a neighbourhood $U$ of
$x$ such that for each $y \in U$ there are an abelian subgroup $K_y \le
\Iso(G)x$ with $K_y \subseteq \Iicl x$ and a homomorphism $q_y \colon K_y \to \Ii y$
such that
\begin{enumerate}[label=\textup{(\alph*)}]
\item\label{it:lem editing g:subset} $\Ii y \cap \supp^\circ (g) \subseteq
    q_y(K_y \cap \supp(g))$;
\item\label{it:lem editing g:injective} $q_y$ is injective on $K_y \cap \supp(g)$; and
\item\label{it:lem editing g:epsilon} $|g(q_y(\gamma)) - g(\gamma)| < \varepsilon$ for all $\gamma \in K_y
    \cap \supp(g)$.
\end{enumerate}
\end{lemma}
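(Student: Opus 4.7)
The plan is to apply Corollary~\ref{cor:all the subgroups} to a finite family of open bisections indexed by $T := \Iicl x \cap \supp(g)$. Because $G$ is \'etale the fiber $Gx$ is discrete in $G$, and $\supp(g)$ is compact, so $T$ is finite. Moreover $\Iso(G)$ is closed, so $T \subseteq \Iicl \subseteq \Iso(G)$, and hence each element of $T$ is a valid distinguished isotropy element for Corollary~\ref{cor:all the subgroups}.

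For each $\gamma \in T$, using Hausdorffness of $G$, continuity of $g$, and finiteness of $T$, I pick an open bisection $D_\gamma \ni \gamma$ such that the family $\{D_\gamma : \gamma \in T\}$ is pairwise disjoint and $|g(\alpha) - g(\gamma)| < \varepsilon$ for all $\alpha \in D_\gamma$. Applying Corollary~\ref{cor:all the subgroups} with index set $I_0 := T$ and these bisections yields a neighbourhood $U_0$ of $x$, and for each $y \in U_0$ an abelian subgroup $K_y \le \Iso(G)x$ with $K_y \subseteq \Iicl x$ together with a homomorphism $q_y \colon K_y \to \Ii y$ satisfying the three conclusions of the corollary.

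The main obstacle is then to further shrink $U_0$ to a neighbourhood $U$ of $x$ so that $\Iicl y \cap \supp(g) \subseteq \bigcup_{\gamma \in T} D_\gamma$ for every $y \in U$. This will follow from a uniform compactness argument: if no such $U$ existed, I could extract a net $y_\lambda \to x$ together with points $\beta_\lambda \in \Iicl y_\lambda \cap \supp(g)$ with $\beta_\lambda$ outside every $D_\gamma$; compactness of $\supp(g)$ and closedness of $\Iicl$ would force a subnet of $(\beta_\lambda)$ to converge to some $\beta \in \Iicl x \cap \supp(g) = T$, landing $\beta_\lambda$ in the open set $D_\beta$ eventually and producing a contradiction.

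With this $U$ in hand, all three conclusions fall out. For~\ref{it:lem editing g:subset}: any $\beta \in \Ii y \cap \supp^\circ(g) \subseteq \Iicl y \cap \supp(g)$ lies in a unique $D_\gamma$, so $D_\gamma \cap \Ii y \ni \beta$ makes $\gamma$ a generator of $K_y$ by Condition~\ref{it:cor subgps:K_y} of Corollary~\ref{cor:all the subgroups}, and Condition~\ref{it:cor subgps:hom} of the same corollary yields $q_y(\gamma) = \beta$ with $\gamma \in K_y \cap \supp(g)$. For~\ref{it:lem editing g:injective} and~\ref{it:lem editing g:epsilon}: since $K_y \subseteq \Iicl x$ we have $K_y \cap \supp(g) \subseteq T$, and I verify that $q_y(\gamma) \in D_\gamma$ for each such $\gamma$. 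If $\gamma$ is itself a generator of $K_y$, this is immediate from Condition~\ref{it:cor subgps:hom}; otherwise, writing $\gamma = \prod_\delta \delta^{m_\delta}$ as a product of generators and applying Condition~\ref{it:cor subgps:integers} with $j = \gamma$ places $q_y(\gamma) = \prod_\delta q_y(\delta)^{m_\delta}$ inside $\prod_\delta (D_\delta \cap \Ii)^{m_\delta} U_0 \subseteq D_\gamma$. Pairwise disjointness of the $D_\gamma$ then gives injectivity of $q_y$ on $K_y \cap \supp(g)$, and the choice of $D_\gamma$ immediately gives $|g(q_y(\gamma)) - g(\gamma)| < \varepsilon$.
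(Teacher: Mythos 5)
Your argument is correct in substance and takes a genuinely different route from the paper's. The paper first invokes Lemma~\ref{lem:can rewrite g} to decompose $g = \sum_i g_i$ into pieces supported on bisections with a disjointness property along $Gx$, defines its index set $I_0$ in terms of the supports of those pieces, and then must separately control the pieces whose supports miss $\Iicl x$ (the sets $U_i$ for $i \notin I_0$) both to prove~\ref{it:lem editing g:subset} and to convert the estimate on each $g_i$ into an estimate on $g$. You bypass all of that: you work directly with the finite set $T = \Iicl x \cap \supp(g)$, choose pairwise disjoint bisections $D_\gamma$ on which $g$ itself varies by less than $\varepsilon$, and replace the role of the sets $U_i$, $i \notin I_0$, by a single compactness argument showing that $\Iicl y \cap \supp(g) \subseteq \bigcup_{\gamma\in T} D_\gamma$ for all $y$ near $x$. (Your net argument is valid; it can be phrased even more directly: $F := (\Iicl \cap \supp(g)) \setminus \bigcup_{\gamma\in T} D_\gamma$ is compact and misses $Gx$, so $s(F)$ is closed and omits $x$.) This makes Lemma~\ref{lem:can rewrite g} unnecessary and shortens the verification of~\ref{it:lem editing g:epsilon}. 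Both proofs rest on the same key input, Corollary~\ref{cor:all the subgroups}, applied in essentially the same way, and your verification that $q_y(\gamma) \in D_\gamma$ for $\gamma \in K_y \cap \supp(g)$ via Conditions~\ref{it:cor subgps:integers} and~\ref{it:cor subgps:hom} of that corollary mirrors the ``claim'' in the paper's proof of~\ref{it:lem editing g:injective}.

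One corner case needs a word. If $x \in \supp(g)$ then $x \in T$ (since $G^{(0)} \subseteq \Ii \subseteq \Iicl$), and $x$, being the identity of $K_y$, always lies in $K_y \cap \supp(g)$ with $q_y(x) = y$. When the generating set $\{\gamma \in T : D_\gamma \cap \Ii y \neq \emptyset\}$ is empty, neither Condition~\ref{it:cor subgps:hom} nor Condition~\ref{it:cor subgps:integers} (which requires a nonempty $S$) applies, so your claim that $q_y(x) \in D_x$ is not established, and~\ref{it:lem editing g:epsilon} for $\gamma = x$ would fail if $|g(y)-g(x)| \ge \varepsilon$. The fix is a single line: shrink $U$ once more so that $|g(y) - g(x)| < \varepsilon$ for all $y \in U$, using continuity of $g|_{G^{(0)}}$ at $x$; this settles~\ref{it:lem editing g:epsilon} for the identity element directly. (The paper's own proof elides the same case, treating $S=\emptyset$ only for~\ref{it:lem editing g:injective}.) With that addition your proof is complete.
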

\begin{proof}
Since $g \in C_c(G)$, Lemma~\ref{lem:can rewrite g} implies that we can write
$g = \sum^n_{i=1} g_i$ so that for each $i$ there is an open bisection $C_i$
such that $g_i \in C_c(C_i)$, and so that if $C_i \cap C_j \cap Gx \not=
\emptyset$, then $i = j$.

Let
\[
I_0 := \{i \le n : \supp(g_i) \cap \Iicl x \not= \emptyset\}.
\]
Then for $i \in I_0$, since $\supp(g_i) \subseteq C_i$, it follows that $C_i
\cap \Iicl x \not= \emptyset$. Since $C_i$ is a bisection, $C_i x$ is a
singleton; we write $\gamma_i$ for the unique element of $\supp(g_i)x
\subseteq C_i x$. By choice of the $C_i$ and $g_i$ in the preceding
paragraph, the map $i \mapsto \gamma_i$ is an injection from $I_0$ to $\Iicl
x$.

Since $G$ is Hausdorff, the $\gamma_i$ can be separated by mutually disjoint
open neighbourhoods. For each $i$, sets of the form $C_i V$, where $V$ is an
open neighbourhood of $s(\gamma_i) = x$ are a neighbourhood base at
$\gamma_i$. Since $i \mapsto \gamma_i$ is injective, it follows that there is
a neighbourhood $W$ of $x$ such that the bisections $C_i W, i \in I_0$ are
mutually disjoint \label{page:C_i W mutually disjoint} and such that whenever
$i \in I_0$ satisfies $g_i(\gamma_i) \not= 0$ we have $0 \not\in g_i(C_i W)$.
Since $g=\sum_{i=1}^{n}g_i$ and $\{\gamma_i\}=\supp(g_i)x\subseteq C_i$, this
implies in particular that
\begin{equation}\label{eq:gamma_i in supp(g)}
    \gamma_i\in \supp(g) \text{ for all }i\in I_0.
\end{equation}
Corollary~\ref{cor:all the subgroups} applied to these
bisections gives a neighbourhood $U_0 \subseteq W$ of $x$ satisfying
Conditions \ref{it:cor subgps:K_y}, \ref{it:cor subgps:integers}, \ref{it:cor
subgps:hom} of that corollary.

For each $i \in I_0$, since $g_i$ is continuous and since $s \colon C_i \to s(C_i)
\subseteq G^{(0)}$ is a homeomorphism, there is a neighbourhood $U_i$ of $x$
such that $\sup_{\eta \in C_i U_i} |g_i(\eta) - g_i(\gamma_i)| <
\varepsilon$.
Let
\[
    U_{I_0} := \bigcap_{i \in I_0} U_i.
\]
Then $U_{I_0}$ is an open neighbourhood of $x$ satisfying
\begin{equation}\label{eq:norm control}
	\sup_{\eta \in C_i U_{I_0}} |g_i(\eta) - g_i(\gamma_i)| < \varepsilon
	\text{ for every } i\in I_0.
\end{equation}

Fix $i \in \{1, \dots, n\} \setminus I_0$. We claim that there is a
neighbourhood $U_i$ of $x$ such that $\supp^\circ(g_i) U_i \cap \Ii =
\emptyset$. To see this, consider
two cases: either $x \in s(\supp(g_i))$ or not. If $x \in s(\supp(g_i))$, let
$\gamma$ be the unique element of $\supp(g_i) \subseteq C_i$ such that
$s(\gamma) = x$. Since $\supp(g_i) \cap \Iicl x = \emptyset$, we have $\gamma
\not\in \Iicl x$. Hence there is an open set
$\W \subseteq G$ containing $\gamma$ such that
$\W \cap \Iicl = \emptyset$. Let $U_i := s(\W \cap C_i)$. This is an open set
because $s$ is open, and we have $\supp^\circ(g_i) U_i \subseteq C_i U_i = \W
\cap C_i $ because $C_i$ is a bisection. Since $\W \cap \Ii \subseteq
\W \cap \Iicl = \emptyset$, this set $U_i$ does the job. Now suppose that $x
\not\in s(\supp(g_i))$. Since $s|_{C_i}$ is a homeomorphism and $\supp(g_i)
\subseteq C_i$, $s(\supp(g_i))$ is closed. So there is a neighbourhood $U_i$ of
$x$ such that $U_i \cap s(\supp(g_i)) = \emptyset$. So $\supp^\circ(g_i) U_i$
is empty, and in particular does not intersect $\Ii$. This proves the claim.

Let
\[
    U_{I_0^c} := \bigcap_{i \in \{1 ,\dots, n\} \setminus I_0} U_i.
\]
Then $U_{I_0^c}$ is an open neighbourhood of $x$ (we use the convention that
$U_{I_0^c}=G^{(0)}$ if $\{1 ,\dots, n\} \setminus I_0=\emptyset$). So for all
$i \in \{1, \dots, n\} \setminus I_0$, we have $\supp^\circ(g_i) U_{I_0^c}
\cap \Ii = \emptyset$.

Now let
\[
U := U_0 \cap U_{I_0} \cap U_{I_0^c}.
\]
We show that this $U$ has the desired properties. Fix $y \in U$. Let
\[
S:= \{i \in I_0: C_i \cap \Ii y \not= \emptyset\},
\]
and let $K_y$ be the subgroup of $\Iso(G) x$ generated by
$\{\gamma_i : i\in S\}$. Since $U \subseteq U_0$,
Condition~\ref{it:cor subgps:K_y} of Corollary~\ref{cor:all the subgroups}
guarantees that $K_y$ is an abelian subgroup of $\Iso(G) x$ and is contained
in $\Iicl x$, and Condition~\ref{it:cor subgps:hom} of Corollary~\ref{cor:all
the subgroups} gives a homomorphism $q_y \colon K_y \to \Ii y$ such that
$q_y(\gamma_i) \in C_i y$ for all $i\in S$.

For Property~\ref{it:lem editing g:subset}, suppose that $\beta \in \Ii y$
satisfies $g(\beta) \not= 0$. Then $g_i(\beta) \not= 0$ for some $i \in \{1,
\dots, n\}$. As $y\in U\subseteq U_{I_0^c}\subseteq U_i$, this means
$\{\beta\}= \supp^\circ (g_i)U_i\cap \Ii y \subseteq C_i \cap \Ii y$. By our
choice of the sets $U_j$ for $j \in\{1,\ldots,n\}\setminus I_0$, this implies
that $i \in I_0$ and hence $i\in S$. Thus, $\gamma_i$ is one of the
generators of $K_y$. Since $q_y(\gamma_i)$ is the unique element of $C_i y$,
it must coincide with $\beta$, and we deduce with~\eqref{eq:gamma_i in
supp(g)} that $q_y(\gamma_i) \in q_y(K_y \cap \supp(g))$ as required.

For~\ref{it:lem editing g:injective}, note first that this is trivial if
$S=\emptyset$, since then the set $K_y\cap \supp(g)$ is at most a singleton.
We may therefore assume that $S\neq\emptyset$.

We claim that, if $i\in I_0$ satisfies $\gamma_i\in K_y$, then
$q_{y}(\gamma_i)\in C_i y$ (so in particular $i\in S$). To see this, write
$\gamma_i$ as a product of the generators of $K_{y}$;
that is, fix integers $m_l, l\in S$ such that
$\gamma_{i}=\prod_{l\in S}\gamma_l^{m_l}$. By construction of $S$, we may
apply Condition~\ref{it:cor subgps:integers} of Corollary~\ref{cor:all the
subgroups} to it:
 \begin{align*}
         \Ii y\ni q_{y}(\gamma_i)
        &=
        \prod_{l\in S} q_{y}( \gamma_l)^{m_l}
        &&\text{by choice of }(m_l)_l
        \\
        &\in
        \prod_{l \in S} (C_l \cap   \Ii y)^{m_l}
        &&\text{by Corollary~\ref{cor:all the subgroups}\ref{it:cor subgps:hom} and choice of $S$}
        \\
        &\subseteq
        \prod_{l \in S} (C_l \cap \Ii)^{m_l} U_0
        &&\text{as }y\in U \subseteq U_0
        \\
        &\subseteq C_i
        &&\text{by Corollary~\ref{cor:all the subgroups}\ref{it:cor subgps:integers} and choice of $(m_{l})_l$}.
\end{align*}
This establishes the claim.

Now, suppose that $\gamma, \gamma'$ are distinct elements of $K_y \cap
\supp(g)$. Since $K_y \subseteq \Iicl x$ and $\supp(g)\subseteq
\bigcup_{i\in\{1,\ldots,n\}} C_i$, it follows from the opening paragraph of
the proof that $\gamma = \gamma_i$ and $\gamma' = \gamma_j$ for distinct
$i, j \in I_0$. By the above argument, we have $q_{y}(\gamma)\in C_i y$ and
$q_{y}(\gamma')\in C_j y$. Since we chose the neighborhood $W$ of $x$ so that
the bisections $C_i W$, $i \in I_0,$ are mutually disjoint (page
\pageref{page:C_i W mutually disjoint}) and since $y\in U\subset U_0\subseteq
W$, we have $C_i y \cap C_j y = \emptyset$. In other words,
$q_{y}(\gamma)\neq q_{y}(\gamma')$, as claimed.

It remains to establish~\ref{it:lem editing g:epsilon}. Suppose that $\gamma
\in K_y \cap \supp(g)$. As in the argument for~\ref{it:lem editing
g:injective} above, $\gamma = \gamma_j \in C_j$ for some $j \in I_0$, and the
claim above shows that $q_y(\gamma) \in C_j y$. By our choice of the sets
$U_i$ for $i \not\in I_0$ and since $y\in U\subseteq U_i$, we have
$g(q_y(\gamma)) = \sum^n_{i=1} g_i(q_y(\gamma)) = \sum_{i \in I_0}
g_i(q_y(\gamma))$. As in the preceding paragraph, $C_i U_0 \cap C_j U_0 =
\emptyset$ for $i \in I_0 \setminus \{j\}$, so $q_y(\gamma) \not\in C_i$ for
$i \in I_0 \setminus \{j\}$, and hence $g_i(q_y(\gamma)) = 0$ for $i \in I_0
\setminus \{j\}$. Hence $g(q_y(\gamma)) = g_j(q_y(\gamma))$. In the case
$y=x$, so that $q_y=\mathrm{id}$, this proves $g(\gamma) = g_j(\gamma)$.
Since $s(q_{y}(\gamma))=y\in U\subseteq U_{I_0}$, we deduce
from~\eqref{eq:norm control} that $|g(q_y(\gamma)) - g(\gamma)|=
|g_j(q_y(\gamma_j)) - g_j(\gamma_j)| < \varepsilon$.
\end{proof}

\begin{lemma}\label{lem:sheesh}
For $y \in
G^{(0)}$, let $\lambda_y \colon C^*_r(G) \to \Bb(\ell^2(Gy))$ be the left regular
representation, let $P_{\Ii y}$ be the orthogonal projection of $\ell^2(Gy)$ onto
$\ell^2(\Ii y)$, and let $\Phi_y \colon C^*_r(G) \to
\Bb(\ell^2(\Ii y))$ be the map
\begin{equation}\label{eq:def Phi_y}
\Phi_y(a) = P_{\Ii y} \lambda_y(a) P_{\Ii y}.
\end{equation}
Fix $x \in G^{(0)}$ and let $Q$ be the orthogonal projection of $\ell^2(Gx)$ onto
$\ell^2(\Iicl x)$. Then for each $g \in C_c(G)$ and each $\varepsilon >
0$, there is a neighbourhood $U$ of $x$ such that $\|\Phi_y(g)\| \le \|Q
\lambda_x(g) Q\| + \varepsilon$ for all $y \in U$.
\end{lemma}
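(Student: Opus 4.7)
The plan is to identify $\|\Phi_y(g)\|$ with a regular-representation norm on a discrete abelian group related via Lemma~\ref{lem:editing g} to a subgroup of $\Iso(G)x$, and then to bound it by $\|Q\lambda_x(g)Q\|$ through two norm-decreasing operations. Since $\supp(g)$ is compact and $Gx$ is discrete (as $G$ is étale), $N := |Gx \cap \supp(g)|$ is finite. Apply Lemma~\ref{lem:editing g} with $\varepsilon' := \varepsilon/(2N+1)$ to obtain a neighbourhood $U$ of $x$ and, for each $y \in U$, an abelian subgroup $K_y \le \Iso(G)x$ with $K_y \subseteq \Iicl x$ together with a homomorphism $q_y \colon K_y \to \Ii y$ satisfying~\ref{it:lem editing g:subset}--\ref{it:lem editing g:epsilon}; set $H_y := q_y(K_y) \le \Ii y$, and note $|K_y \cap \supp(g)| \le N$.

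The first step is to show $\|\Phi_y(g)\| = \|g|_{H_y}\|_{C^*(H_y)}$, where $g|_{H_y}$ is viewed as a finitely supported function on the discrete abelian group $H_y$. The matrix entry $(\Phi_y(g) e_\beta \mid e_\eta) = g(\beta\eta^{-1})$ vanishes unless $\beta\eta^{-1} \in \Ii y \cap \supp^\circ(g)$, and~\ref{it:lem editing g:subset} ensures this intersection is contained in $q_y(K_y \cap \supp(g)) \subseteq H_y$. Hence $\Phi_y(g)$ is block-diagonal over $H_y$-cosets in $\Ii y$, and abelianness of $\Ii y$ makes the translation identification of each block with $\lambda_{H_y}(g|_{H_y})$ unitary, giving the asserted norm identity.

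The second step exploits the *-homomorphism $\pi \colon C^*(K_y) \to C^*(H_y)$ induced by the surjection $q_y$, acting on $\ell^1$ by $\pi(f)(h) = \sum_{\gamma \in q_y^{-1}(h)} f(\gamma)$. By~\ref{it:lem editing g:injective}, applied to $f = g|_{K_y}$, each fibre sum collapses to at most one nonzero term, so both $\pi(g|_{K_y})$ and $g|_{H_y}$ are supported on $q_y(K_y \cap \supp(g))$, a set of cardinality at most $N$. A short case analysis using~\ref{it:lem editing g:subset} and~\ref{it:lem editing g:epsilon} (treating separately $g(h) \ne 0$, $h \in q_y(K_y \cap \supp(g))$ with $g(h) = 0$, and $h$ outside $q_y(K_y \cap \supp(g))$) yields $|g|_{H_y}(h) - \pi(g|_{K_y})(h)| < \varepsilon'$ for every $h \in H_y$, and therefore
\[
\|g|_{H_y} - \pi(g|_{K_y})\|_{C^*(H_y)} \le \|g|_{H_y} - \pi(g|_{K_y})\|_{\ell^1(H_y)} \le N\varepsilon'.
\]

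Two standard contractions conclude the argument. First, since $\pi$ is a *-homomorphism, $\|\pi(g|_{K_y})\|_{C^*(H_y)} \le \|g|_{K_y}\|_{C^*(K_y)}$. Second, the compression $P_{K_y}\lambda_x(g)P_{K_y}$ on $\ell^2(K_y)$ has matrix entries $g(\delta\gamma^{-1})$ for $\gamma,\delta \in K_y$ and therefore coincides with $\lambda_{K_y}(g|_{K_y})$, whence $\|g|_{K_y}\|_{C^*(K_y)} = \|P_{K_y}\lambda_x(g)P_{K_y}\|$; since $P_{K_y} \le Q$ (because $K_y \subseteq \Iicl x$), this is also a compression of $Q\lambda_x(g)Q$, yielding $\|g|_{K_y}\|_{C^*(K_y)} \le \|Q\lambda_x(g)Q\|$. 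Chaining everything gives $\|\Phi_y(g)\| \le \|Q\lambda_x(g)Q\| + N\varepsilon' < \|Q\lambda_x(g)Q\| + \varepsilon$, as required. The main obstacle is the pointwise estimate in the second step: establishing that $\pi(g|_{K_y})$ uniformly approximates $g|_{H_y}$ on a support of $N$-bounded cardinality requires careful coordination of all three properties~\ref{it:lem editing g:subset}--\ref{it:lem editing g:epsilon}.
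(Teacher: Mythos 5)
Your proposal is correct and follows essentially the same route as the paper's proof: apply Lemma~\ref{lem:editing g}, identify $\Phi_y(g)$ with the image of $g$ restricted to $q_y(K_y)$ in a group $C^*$-algebra, pull back through the homomorphism induced by $q_y$ at an $\varepsilon$-cost controlled by properties \ref{it:lem editing g:subset}--\ref{it:lem editing g:epsilon}, and recognise the resulting $C^*(K_y)$-norm as that of the compression $P_{K_y}\lambda_x(g)P_{K_y} \le Q\lambda_x(g)Q$. The only (harmless) differences are that you pass to the subgroup $q_y(K_y)$ via a coset decomposition of the reduced algebra where the paper uses the isometric inclusion of full group $C^*$-algebras, and you package the error as an $\ell^1$-estimate rather than a term-by-term triangle inequality.
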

\begin{proof}
Fix $g \in C_c(G)$ and $\varepsilon > 0$. Let $N := |\Iicl x \cap \supp(g)|$.
By Lemma~\ref{lem:editing g}, there exists a neighbourhood $U$ of $x$ such
that for each $y \in U$ there are a subgroup $K_y$ of $\Iso(G) x$ contained
in $\Iicl x$ and a homomorphism $q_y \colon K_y \to \Ii y$ such that
\begin{enumerate}[label=\textup{(\alph*)}]
\item\label{it:lem editing g:subset,specific} $\Ii y \cap \supp^\circ (g) \subseteq q_y(K_y \cap
    \supp(g))$;
\item\label{it:lem editing g:injective,specific} $q_y$ is injective on $K_y \cap \supp(g)$; and
\item\label{it:lem editing g:epsilon,specific}  $|g(q_y(\gamma)) - g(\gamma)| < \varepsilon/(N+1)$ for all $\gamma
    \in K_y \cap \supp(g)$.
\end{enumerate}

Fix $y \in U$. We must show that $\|\Phi_y(g)\| \le \|Q \lambda_x(g) Q\| +
\varepsilon$. Writing $\lambda^{\Ii y}_{\beta}$ for the unitaries in the regular representation of $\Ii y$, we can write $\Phi_y$  as
\[
 \Phi_y(g)
    = \sum_{\beta  \in \Ii y} g(\beta )\lambda^{\Ii y}_\beta.
\]

In this proof, we denote the universal unitary representations of the groups $K_y$, $\Ii y$ and $q_y(K_y) \subseteq \Ii_y$ respectively by
\begin{gather*}
K_y \owns \gamma \mapsto W^{K_y}_\gamma \in C^*(K_y),
    \qquad
    \Ii_y \owns \beta \mapsto W^{\Ii y}_\beta \in C^*(\Ii_y),
    \qquad\text{and}\\
    q_y(K_y) \owns \beta \mapsto W^{q_y(K_y)}_\beta \in C^*(q_y(K_y)),
\end{gather*}
and we denote the regular representation of $K_y$ by
\[
    K_y \owns \gamma \mapsto \lambda^{K_y} \in \Bb(\ell^2(K_y)).
\]
Since $\Ii y$ is abelian and hence amenable,
\begin{equation}\label{eq:lambdaI->WI}
    \Big\|\sum_{\beta  \in \Ii y} g(\beta )\lambda^{\Ii y}_\beta \Big\|
    = \Big\|\sum_{\beta  \in \Ii y} g(\beta )W^{\Ii y}_\beta \Big\|.
\end{equation}
Using that $g(\beta) = 0$ for $\beta \in \Ii y \setminus
q_y(K_y\cap \supp(g))$ by Property~\ref{it:lem editing g:subset,specific}, we see that
\begin{equation}\label{eq:Iy->Ky cap supp}
\Big\|\sum_{\beta \in \Ii y} g(\beta)W^{\Ii y}_\beta\Big\|
    = \Big\|\sum_{\beta \in q_y(K_y\cap \supp(g))} g(\beta)W^{\Ii y}_\beta\Big\|.
\end{equation}
Since the inclusion $q_{y}(K_y) \hookrightarrow \Ii y$ of groups induces an injective, and hence isometric, homomorphism of full $C^*$-algebras
$C^*(q_y(K_y)) \hookrightarrow C^*(\Ii y)$, we can rewrite the right-hand side,
\begin{equation}\label{eq:WI->WqK}
\Big\|\sum_{\beta \in q_y(K_y\cap \supp(g))} g(\beta)W^{\Ii y}_\beta\Big\|
    = \Big\|\sum_{\beta \in q_y(K_y\cap \supp(g))} g(\beta)W^{q_y(K_y)}_\beta\Big\|.
\end{equation}
Using~\eqref{eq:lambdaI->WI}, \eqref{eq:Iy->Ky cap supp}~and~\eqref{eq:WI->WqK} at the first step, and then Property~\ref{it:lem editing g:injective,specific}
of the homomorphism $q_y$ at the second, we see that
\[
\|\Phi_y(g)\|
    = \Big\|\sum_{\beta \in q_y(K_y\cap \supp(g))} g(\beta) W^{q_y(K_y)}_\beta\Big\|
    = \Big\|\sum_{\gamma \in K_y \cap \supp(g)} g(q_y(\gamma)) W^{q_y(K_y)}_{q_y(\gamma)}\Big\|.
\]
Writing each $g(q_y(\gamma)) W^{q_y(K_y)}_{q_y(\gamma)} = g(\gamma) W^{q_y(K_y)}_{q_y(\gamma)} + \big(-g(\gamma) + g(q_y(\gamma))\big)W^{q_y(K_y)}_{q_y(\gamma)}$ and then applying the triangle inequality, we obtain
\begin{align*}
\|\Phi_y(g)\|
    &\le \Big\|\sum_{\gamma \in K_y \cap \supp(g)} g(\gamma) W^{q_y(K_y)}_{q_y(\gamma)}\Big\| + \sum_{\gamma \in K_y \cap \supp(g)} |g(\gamma)- g(q_y(\gamma))|.
\end{align*}
The homomorphism $q_y \colon K_y \to q_{y}(K_y)$ induces a homomorphism $\pi_y \colon C^*(K_y) \to C^*(q_{y}(K_y))$ such that
$\pi_y(W^{K_y}_\gamma) = W^{q_y(K_y)}_{q_y(\gamma)}$ for all $\gamma \in K_y$. This combined with Property~\ref{it:lem editing g:epsilon,specific} yields
\begin{align*}
\|\Phi_y(g)\|
    &\le \Big\|\pi_y\Big(\sum_{\gamma \in K_y \cap \supp(g)} g(\gamma) W^{K_y}_{\gamma}\Big)\Big\| + \sum_{\gamma \in K_y \cap \supp(g)} \varepsilon/(N+1).
\end{align*}
Since $\pi_y$ is a $C^*$-homomorphism, it is norm-decreasing, and so, since $N = |\Iicl x \cap \supp(g)| \ge |K_y \cap \supp(g)|$, we obtain
\begin{equation}\label{eq:Phiyg estimate}
\|\Phi_y(g)\|
    < \Big\|\sum_{\gamma \in K_y \cap \supp(g)} g(\gamma) W^{K_y}_{\gamma}\Big\| + \varepsilon
    = \Big\|\sum_{\gamma \in K_y} g(\gamma) W^{K_y}_\gamma\Big\| + \varepsilon.
\end{equation}
Since $K_y$ is abelian and hence amenable,
\begin{equation}\label{eq:full->regular}
\Big\|\sum_{\gamma \in K_y} g(\gamma) W^{K_y}_\gamma\Big\|
    = \Big\|\sum_{\gamma \in K_y} g(\gamma) \lambda^{K_y}_\gamma\Big\|.
\end{equation}

Let $Q_{K_y}$ be the orthogonal projection of $\ell^2(Gx)$ onto $\ell^2(K_y)$.  Recall that $\lambda_x$ is the regular
representation of $C^*(G)$ on $\ell^2(Gx)$. We claim that
\begin{equation}\label{anna:claim1,v2}
	\sum_{\gamma \in K_y} g(\gamma) \lambda^{K_y}_\gamma = Q_{K_y} \lambda_x(g) Q_{K_y},
\end{equation}
regarded as operators on $\ell^2(K_y)$. To see this, we fix $\eta \in K_y$ (so that $e_\eta$ is a typical basis element for $\ell^2(K_y)$), and show that $\sum_{\gamma \in K_y} g(\gamma) \lambda^{K_y}_\gamma e_\eta = Q_{K_y} \lambda_x(g) Q_{K_y} e_\eta$. We calculate
\begin{align*}
\bigl(Q_{K_y}\lambda_x(g)Q_{K_y}\bigr) e_\eta
	&=	Q_{K_y}\lambda_x(g) e_\eta
        &&\text{as } Q_{K_y}e_\eta = e_\eta \\
    &= \sum_{\gamma\in G_{r(\eta)}} g(\gamma)Q_{K_y} e_{\gamma\eta}
        &&\text{by definition of }\lambda_x \\
	&= \sum_{\gamma\in K_y} g(\gamma)e_{\gamma\eta}
        &&\text{as $K_y\leq G$, and by defnition of $Q_{K_{y}}$}\\
    &= \sum_{\gamma\in K_y} g(\gamma) \lambda^{K_y}_{\gamma} e_\eta
        &&\text{by definition of }\lambda^{K_y},
	\end{align*}
estabishing~\eqref{anna:claim1,v2}. Recall that $Q \colon \ell^2(Gx) \to \ell^2(\Iicl x)$ is the orthogonal projection. Since $\ell^2(K_y) \subseteq
\ell^2(\Iicl x)$, we have $Q_{K_y} \le Q$, and so~\eqref{eq:full->regular} gives
\begin{align*}		
\Big\|\sum_{\gamma \in K_y} g(\gamma) W^{K_y}_\gamma\Big\|
	&\overset{\eqref{eq:full->regular}}{=} \Big\|\sum_{\gamma \in K_y} g(\gamma) \lambda^{K_y}_\gamma\Big\|\\
	&\overset{\eqref{anna:claim1,v2}}{=} \Big\| Q_{K_y} \lambda_x(g) Q_{K_y}\Big\|		
    = \Big\|Q_{K_y} Q \lambda_x(g)Q Q_{K_y}\Big\|
    \leq \Big\| Q \lambda_x(g) Q\Big\|.
\end{align*}
Combining this with~\eqref{eq:Phiyg estimate} yields the desired estimate $\|\Phi_y(g)\| \le \|Q \lambda_x(g) Q\| + \varepsilon$.
\end{proof}

\begin{proof}[Proof of Theorem~\ref{thm:phew!}]
Fix $a \in C^*_r(G)$ such that the open support of $j(a)$ is contained in
$\Ii$ and $j(a)|_{\Ii x} = 0$. We prove that for every $\varepsilon
> 0$ there exists $h \in C_0(G^{(0)})$ such that $h(x) = 0$ and $\|a - ah\|
\le \varepsilon$; since each $ah \in J_x$ this proves that $a \in J_x$.

So
fix $\varepsilon > 0$.
Since $C_c(G)$ is dense in $C^*_r(G)$, there exists $g \in C_c(G)$ such that
\begin{equation}\label{eq:choice of g}
    \|g - a\| < \varepsilon/4.
\end{equation}

Let $Q \colon \ell^2(Gx) \to \ell^2(\Iicl x)$ be the orthogonal projection, and let $\Phi_y$ be as defined in Equation~\eqref{eq:def Phi_y}. By
Lemma~\ref{lem:sheesh} there is a neighbourhood $U$ of $x$ such that
\begin{equation}\label{eq:property of Phi_y in pf of thm}
    \sup_{y\in U}\|\Phi_y(g)\| < \|Q \lambda_x(g) Q\| + \varepsilon/4.
\end{equation}
Since $j(a)$ is identically zero on $\Ii x \cup G\setminus\Ii$ by assumption, we
have $j(a)(\gamma) = 0$ for all $\gamma \in Gx$. Hence, for $\gamma,\eta \in
\Iicl x$, since $\gamma\eta^{-1} \in Gx$, we have $( \lambda_x(a) e_\gamma \mid e_\eta ) = j(a)(\gamma\eta^{-1}) = 0$. Hence
\begin{align*}
( Q \lambda_x(a)Q e_\gamma \mid e_\eta )_{\ell^2(\Iicl x)}
    &= ( \lambda_x(a)Q e_\gamma \mid Q e_\eta )_{\ell^2(Gx)}
	 = 0.
\end{align*}
Hence $Q \lambda_x(a)Q e_\gamma = 0$ for each basis element $e_\gamma$, so $Q \lambda_x(a) Q = 0$. It follows from~\eqref{eq:choice of g} that
$\varepsilon/4 > \|Q\lambda_x(g)Q - Q\lambda_x(a)Q\| = \|Q \lambda_x(g) Q\|$, and thus by~\eqref{eq:property of Phi_y in pf of thm},
\begin{equation}\label{eq:sup over U estimate}
    \sup_{y \in U} \|\Phi_y(g)\| < \varepsilon/2.
\end{equation}

If $a \in A$ is nonzero, then $j(a)(\gamma) \not= 0$ for some $\gamma \in
\Ii$, and hence $( \Phi_{s(\gamma)}(a) e_{s(\gamma)} \mid e_\gamma) =
j(a)(\gamma)$ is nonzero. So $\bigoplus_{y \in G^{(0)}} \Phi_y$ is injective
on $A$ and hence
\begin{equation}\label{eq:oplus Phi isometric}
\Big\|\bigoplus_{y \in G^{(0)}} \Phi_y(b)\Big\| = \|b\|\qquad\text{for all $b \in A$.}
\end{equation}

Fix $h \in C_0(G^{(0)}, [0,1])$ such that $h(x) = 0$ and $h(y) = 1$ for all
$y \in s(\supp(g)) \setminus U$. for $y \in G^{(0)}$ and $\eta \in \Ii_y$, we have
\begin{align*}
\Phi_y(g(1-h)) e_\eta
    &= \sum_{\gamma \in G y} g(\gamma)(1 - h(s(\gamma))) P_{\Ii y} e_{\gamma\eta}\\
    &= (1 - h)(y) \sum_{\gamma \in G y} g(\gamma) P_{\Ii y} e_{\gamma\eta}
    = (1 - h)(y) \Phi_y(g) e_\eta.
\end{align*}
Consequently,
\begin{equation} \label{eq:scalar mult}
    \Phi_y(g(1-h)) = (1-h)(y) \Phi_y(g)\quad\text{ for all $y \in G^{(0)}$.}
\end{equation}

Using~\eqref{eq:oplus Phi isometric}, we have
\[
\|a - ah\|
    = \Big\|\Big(\bigoplus_{y \in G^{(0)}} \Phi_y\Big)(a - ah)\Big\|
	= \sup_{y \in G^{(0)}} \|\Phi_y(a - g + g - gh + gh - ah)\|.
\]
Applying the triangle inequality and then~\eqref{eq:choice of g} and that $\|h\| \leq 1$, we obtain
\begin{align*}
\|a - ah\| &\le \sup_{y \in G^{(0)}} \|\Phi_y(a - g)\| + \|\Phi_y(g - gh)\| + \|\Phi_y(a - g)\| \|\Phi_y(h)\|\\
	&\le \varepsilon/4 + \sup_{y \in G^{(0)}} \|\Phi_y(g(1 - h))\| + \varepsilon/4.
\end{align*}
Using that $\Phi_y (g)=0$ for $y \notin s(\supp(g))$ together with~\eqref{eq:scalar mult} and that $1 - h$ vanishes off $U$, we obtain
\[
\|a - ah\| \le \sup_{y \in U} \|\Phi_y(g)\| + \varepsilon/2,
\]
and so \eqref{eq:sup over U estimate} gives $\|a - ah\| < \varepsilon$.
\end{proof}


\begin{thebibliography}{8}
\bibitem{BEFPR} J.H. Brown, A.H. Fuller, D.R. Pitts and S.A. Reznikoff,
    \emph{Intermediate $C^*$-algebras of Cartan embeddings}, Proc. Amer.
    Math. Soc. Ser. B, \textbf{8} (2021), 27--41.
\bibitem{CRST} T.M. Carlsen, E. Ruiz, A. Sims, and M. Tomforde
    \emph{Reconstruction of groupoids and $C^*$-rigidity of dynamical systems},
    Adv. Math. \textbf{390} (2021), 107923.
\bibitem{DWZ} A. Duwenig, D.P. Williams and J. Zimmerman,
    \emph{Non-traditional Cartan subalgebras in twisted groupoid
    $C^*$-algebras}, preprint (2024) (arXiv:2403.15255 [math.OA]).
\bibitem{Paterson} A.L.T. Paterson, ``Groupoids, inverse semigroups, and
    their operator algebras,'' Progress in Mathematics \textbf{170} (1998),
    Birkh\"auser, Boston.
\bibitem{Renault} J.N. Renault, ``A Groupoid Approach to $C^*$-algebras,''
    Lecture Notes in Math. \textbf{793}, Springer-Verlag, Berlin 1980.
\end{thebibliography}
\end{document}